\documentclass[a4paper,british,12pt]{amsart}

\usepackage[T1]{fontenc}
\usepackage{hyperref,lmodern,babel}

\addtolength{\textwidth}{4\baselineskip}
\addtolength{\hoffset}{-2\baselineskip}

\usepackage[all]{xy}
\usepackage{stmaryrd}

\newtheorem{lem}{Lemma}[section]
\newtheorem{prop}[lem]{Proposition}
\newtheorem{thm}[lem]{Theorem}
\newtheorem{cor}[lem]{Corollary}
\newtheorem*{lem*}{Theorem}

\theoremstyle{definition}

\newtheorem*{ex*}{Example}

\newtheorem*{rem*}{Remark}

\newcommand{\norm}[1]{\left\Vert#1\right\Vert}
\newcommand{\abs}[1]{\left\vert#1\right\vert}

\newcommand{\set}[1]{\left\{#1\right\}}

\newcommand{\appli}[3]{#1 \, : \, #2 \To #3}
\newcommand{\operator}[5]{ 
\begin{eqnarray*}
#1 :  \quad #2 &\To& #3 \\ 
#4 &\mapsto& #5 
\end{eqnarray*}}
\newcommand{\rl}{\mathbb R}
\newcommand{\cx}{\mathbb C}
\newcommand{\ir}{\mathbb Z}

\newcommand{\sph}{\mathbb S}

\newcommand{\tor}{\mathbb T}
\newcommand{\To}{\longrightarrow}
\newcommand{\OO}{\mathcal O}
\newcommand{\bP}{\mathbb{P}}

\newcommand{\setC}{\cx}
\newcommand{\setZ}{\ir}
\newcommand{\setR}{\rl}
\newcommand{\bT}{\tor}

\DeclareMathOperator{\ric}{Ric}

\DeclareMathOperator{\inj}{inj}
\DeclareMathOperator{\supp}{supp}

\DeclareMathOperator{\re}{Re} 
\DeclareMathOperator{\im}{Im}


\begin{document}

\author{Olivier Biquard and Vincent Minerbe}
\address{UPMC Universit\'e Paris 6, UMR 7586, Institut de Math\'ematiques
  de Jussieu}
\title{A K\"ummer construction for gravitational instantons}

\begin{abstract}
  We give a simple and uniform construction of essentially all known deformation
  classes of gravitational instantons with ALF, ALG or ALH asymptotics
  and nonzero injectivity radius. We also construct new ALH Ricci flat
  K\"ahler metrics asymptotic to the product of a real line with a flat
  3-manifold.
\end{abstract}

\maketitle

\section*{Introduction}

The aim of this paper is to give a direct and uniform construction for several 
Ricci-flat K\"ahler four-manifolds with prescribed asymptotics, `ALF', `ALG' or `ALH'.
This basically means that these complete Riemannian manifolds have only one end, which is diffeomorphic, 
up to a finite covering, to the total space of a $\tor^{4-m}$-fibration $\pi$ over $\rl^m$ minus a ball
and carries a metric that is asymptotically adapted to this fibration in the following sense. When the fibration 
at infinity is trivial, the metric merely goes to a flat metric on $\rl^m \times \tor^{4-m}$ (without holonomy). 
When $m=3$, the fibration at infinity may be non-trivial and in this case, the metric goes to 
$\pi^* g_{\rl^3} + \eta^2$ where $\eta$ is a connection one-form on the $\sph^1$-fibration, up to scaling (cf. 
\cite{Mi2} for details). The metric is called ALF when the `dimension at infinity' $m$ is $3$, ALG when $m=2$, ALH 
when $m=1$. These asymptotics are generalizations of the familiar ALE (Asymptotically Locally Euclidean) case, for 
which the model at infinity is the Euclidean four-space (in a nutshell: $m=4$). 

Most of our examples of Ricci-flat K\"ahler four-manifolds are simply-connected, hence hyperk\"ahler
(i.e. with holonomy inside $SU(2)$), providing examples of gravitational instantons, namely non-compact hyperk\"ahler 
four-manifolds with decaying curvature at infinity. These manifolds are of special interest in quantum gravity and 
string theory, hence some motivation to understand examples. Previous constructions of gravitational instantons 
were either explicit \cite{EH,GH}, based on hyperk\"ahler reduction \cite{Kro,Dan}, 
gauge theory \cite{AH,CK1,CK2}, twistorial methods \cite{Hit,CH} or on the Monge-Amp\`ere method of Tian and Yau 
\cite{TY1,TY2}, see also \cite{Joy,Kov,Hei,San}.

The technique we advertise here is inspired from the 
famous K\"ummer construction for $K3$ surfaces: starting from the torus $\tor^4=\cx^2/\ir^4$,
we may consider the complex orbifold $\tor^4/\pm$, which has $16$ singularities isomorphic to 
$\cx^2/\pm$; blowing them up, we obtain a $K3$ surface. 
The physicist D. Page \cite{Pag} noticed that this point of view makes it possible to grasp 
some idea of what the Ricci flat K\"ahler metric provided by Yau theorem on the $K3$ surface 
looks like. The recipe is the following. The desingularization of $\cx^2/\pm$ carries an 
explicit Ricci-flat K\"ahler metric known as the Eguchi-Hanson metric \cite{EH}. So the Ricci
flat metric on the $K3$ surface should look like this Eguchi-Hanson metric near each exceptional 
divisor and resemble the flat metric (issued from $\tor^4$) away from them. Such an idea has been 
carried out rigorously by twistorial methods in \cite{LS,Top}. N. Hitchin \cite{Hit} also pointed
out a twistorial argument leading to a Ricci-flat metric on the (non-compact) minimal resolution
of $\set{x^2-z y^2 = z} \subset \cx^3$.

In this short paper, we carry out a rather elementary deformation argument providing Ricci-flat K\"ahler
metrics on the minimal resolution of numerous Ricci-flat K\"ahler orbifolds. We can typically start from 
$\rl^m \times \tor^{4-m} /\pm$ for $m=1,2,3$, $(\rl^2 \times \tor^2) / \ir_k$ for $k=3,4,6$ and 
$(\rl \times \tor^3)/(\ir_2 \times \ir_2)$. Details on these quotients are given in the text. 
The construction also works for some non-flat Ricci-flat K\"ahler orbifolds and we will use it 
on (singular) quotients of the Taub-NUT gravitational instanton \cite{Haw,Leb}. Let $(X,\omega_0)$ 
denote any of these complex K\"ahler orbifolds and $\hat{X}$ be its minimal resolution. We denote the 
exceptional divisors by $E_1, \dots, E_p$ and the Poincar\'e dual of $E_j$ by $PD[E_j]$.

\begin{thm}
Let $a_1, \dots, a_p$ denote some positive parameters. Then for every small enough positive number $\epsilon$, there 
is a Ricci-flat K\"ahler form $\omega$ on $\hat{X}$ in the cohomology 
class $[\omega_0]  - \epsilon \sum a_j PD[E_j]$; 
it is moreover asymptotic to the initial metric $\omega_0$ on  $X$.
\end{thm}

The proof of the theorem relies on a simple gluing procedure and is
essentially self-contained. Even though most of the metrics we build
have already been constructed in the literature by other ways
(cf. above), we believe that our construction is interesting, because
it is very simple and gives a very good approximation coming from the
desingularization procedure. More precisely, the construction
described in this paper provides a way to build members of nearly all
known deformation families of ALF gravitational instantons, by
starting from the explicit Taub-NUT metric and Kronheimer's ALE
gravitational instantons \cite{Kro}. The `nearly' accounts for the
Atiyah-Hitchin metric \cite{AH}, which seems to play a special
role. And more generally, apart from the Atiyah-Hitchin metric, we
have the striking fact that this construction yields members of all
known deformation families of gravitational instantons with positive
injectivity radius. So we somehow get a global and concrete
understanding of all these families.

Let us now overview the Ricci-flat manifolds that we obtain with this
technique.  When $X= (\rl^2 \times \tor^2) / \ir_k$ for $k=2,3,4,6$, we
obtain ALG gravitational instantons. For $X_1=\rl \times \tor^3 /\pm$, our
construction yields an ALH gravitational instanton. These ALG and ALH
examples have been constructed recently in a general Tian-Yau
framework on rational elliptic surfaces in \cite{Hei} (see also
\cite{Kov,San}); in \cite{Hei}, they correspond to isotrivial elliptic
fibrations. Two other quotients, $X_2= (\setR \times \tor^3)/(\setZ_2 \times \setZ_2)$ and
$X_{2,2}=(\setR \times \tor^3)/(\setZ_2\times\setZ_2\times\setZ_2)$, provide examples of ALH
Ricci-flat metrics asymptotic to $\rl_+ \times F_2$ (resp. $\rl_+ \times
F_{2,2}$), where $F_2$ is the compact flat orientable three-manifold
arising as a flat $\tor^2$ bundle over $\sph^1$ with monodromy
$\ir_2$, and $F_{2,2}$ is its $\setZ_2$ quotient $ \tor^3/\setZ_2\times\setZ_2$. These
examples are just global quotients of the first ALH space $\hat
X_1$. Note that, in this case, $\hat X_2$ is K\"ahler Ricci-flat but not
hyperk\"ahler, and $\hat X_{2,2}$ is only locally K\"ahler Ricci flat,
since the last involution is real. Later, in section
\ref{sec:other-alh-ricci}, we also construct actions leading to ALH
Ricci-flat K\"ahler metrics with one end asymptotic to $\setR_+\times F_3$, $\setR_+\times
F_4$ and $\setR_+\times F_6$, where $F_3$, $F_4$ and $F_6$ are the flat
3-manifolds with monodromy $\setZ_3$, $\setZ_4$ and $\setZ_6$. Summarizing this
means that we have constructed examples of ALH Ricci flat metrics,
with an end asymptotic to $\setR_+\times F$, for all possible flat 3-manifolds
$F$. All are K\"ahler except $\hat X_{2,2}$.

Now pass to ALF gravitational instantons. When $X= \rl^3 \times \sph^1 /\pm$,
we obtain a direct PDE construction of the Hitchin metric mentioned
above \cite{Hit}, complementing the twistorial initial description;
this is an ALF gravitational instanton of dihedral type (in the sense
of \cite{Mi2}). Next, for any binary dihedral group $D_k$ ($k>2$), we
can deal with $X= \cx^2/D_k$, endowed with the Taub-NUT metric
(details are given at the end of the text). The resulting Ricci-flat
metrics provide ALF gravitational instantons of dihedral type and they
probably coincide with the $D_k$ gravitational instantons of
\cite{CK1,CH}. The same procedure using an action of a cyclic group
instead of $D_k$ would lead to the ALF gravitational instantons of
cyclic type that is multi-Taub-NUT metrics, given by the
Gibbons-Hawking ansatz (\cite{Mi2}).

Finally, any ALF gravitational instanton has an end which is
asymptotic to $(A,+\infty)\times S$, with $S$ a quotient of $\sph^3$ by a cyclic or
binary dihedral group, or $\sph^2\times \sph^1$ or $\sph^2\times \sph^1/\pm$. We prove a formula
(theorem \ref{thm:euler-eta}) relating the Euler characteristic to the
adiabatic $\eta$-invariant of the boundary $S$. The possible topologies
of $S$ are well-known, but the possible orientations induced on $S$
are less clear (they correspond to `positive mass' or `negative
mass'). Since the $\eta$-invariant detects the orientation, we deduce
that for the dihedral ALF gravitational instantons, the only
boundaries for which both orientations are possible occur for the
groups $D_4$ and $D_3$: the reverse orientations are realized by the
$D_0$ and $D_1$ ALF gravitational instantons, which conventionally are
the Atiyah-Hitchin metric and its double cover, both having negative
mass. Also note that the $D_2$ ALF gravitational instanton is the
Hitchin metric desingularizing $\rl^3 \times \sph^1 /\pm$, which has zero
mass (one way to understand is to say that the flat space $\rl^3 \times
\sph^1 /\pm$ is hyperk\"ahler for both orientations).

The paper is organized as follows. In the first section, we have
chosen to give a detailed proof in the simplest case, that is $X=\rl \times
\tor^3 /\pm$. The necessary adaptations for the other cases are
explained in the second section. The third section contains the
proof of the formula on the $\eta$ invariant, and in the last one, we give
a construction of the other ALH Ricci-flat K\"ahler surfaces. Finally,
an appendix briefly reviews some facts about the weighted analysis on
ALF, ALG or ALH manifolds, needed for the construction.

\noindent\emph{Acknowledgments.} The authors thank Sergei Cherkis for
useful discussions, and Hans-Joachim Hein for carefully checking the
paper and suggesting proposition \ref{prop:alh22}.

\section{A K\"ummer construction}

In this section, we carefully build an ALH gravitational instanton asymptotic 
to $\rl \times \tor^3$. First, we consider the quotient $X := \rl \times \tor^3 /\pm$.
This is a complex orbifold with eight singular points, corresponding to the fixed points 
of minus identity on $\tor^3 = \rl^3/\ir^3$. These are 
all rational double points and we may blow them up to get a non-singular complex manifold
$\hat{X}$. We will build an \emph{approximately} Ricci-flat metric on $\hat{X}$ by patching 
together two type of metrics : the flat metric away from the exceptional divisors and the 
Eguchi-Hanson metric near the divisors.

\subsection{The approximately Ricci-flat K\"ahler metric}

The Eguchi-Hanson metric can be described as follows. Take $\cx^2/\pm$ and blow up the origin
to get the minimal resolution $\appli{\pi}{T^*\cx P^1}{\cx^2/\pm}$. Outside the zero section $\cx P^1= \pi^{-1}(0)$ 
in $T^*\cx P^1$, this map $\pi$ is a biholomorphism. Let $z=(z_1,z_2)$ denote the standard complex coordinates
on $\cx^2$. The formula
$$
\phi_{EH}(z_1,z_2) := \frac12 \left( \sqrt{1+ \abs{z}^4} + 2 \log \abs{z} - \log \left( 1 + \sqrt{1+ \abs{z}^4} \right) \right),
$$
defines a function on $\cx^2/\pm$ and therefore on $T^*\cx P^1 \backslash \pi^{-1}(0)$. Then 
$\omega_{EH} := d d^c \phi_{EH}$ extends on the whole $T^*\cx P^1$ as a K\"ahler form, which turns out to be
Ricci-flat. Moreover, the K\"ahler form $\omega_{EH}$ is asymptotic to the flat K\"ahler form 
$\omega_{EH,0} = d d^c \phi_{EH,0}$, with $\phi_{EH,0}(z)=\frac{\abs{z}^2}{2}$:
$$
\nabla^k (\phi_{EH} - \phi_{EH,0}) =\OO(\abs{z}^{-2-k}) \quad \text{and} \quad
\nabla^k (\omega_{EH} - \omega_{EH,0}) =\OO(\abs{z}^{-4-k}).
$$
For future reference, let us point that the Eguchi-Hanson metric admits a parallel symplectic $(2,0)$ form, 
extending (the pull-back of) $dz_1 \land dz_2$.

Now pick a smooth non-increasing function $\chi$ on $\rl_+$ that is identically $1$ on $[0,1]$ 
and vanishes on $[2,+\infty)$. Given a small positive number $\epsilon$, we introduce the cut-off function 
$\chi_\epsilon (z) := \chi(\sqrt{\epsilon} \abs{z})$, on $T^*\cx P^1$, and we define 
\begin{equation}\label{patch}
\phi_{EH,\epsilon} := \chi_\epsilon \phi_{EH} + (1-\chi_\epsilon) \phi_{EH,0} .
\end{equation}
Then $\omega_{EH,\epsilon} := dd^c \phi_{EH,\epsilon}$ is a $(1,1)$-form on $T^*\cx P^1$ which 
coincides with the Eguchi-Hanson K\"ahler form for $\abs{z} \leq \frac{1}{\sqrt{\epsilon}}$ and 
coincides with the flat K\"ahler form on $\cx^2 /\pm$ for $\abs{z} \geq \frac{2}{\sqrt{\epsilon}}$.
In between, we have the following controls:
$$
\abs{\nabla^k (\phi_{EH,\epsilon} - \phi_{EH,0})} \leq c(k) \sqrt{\epsilon}^{2+k}
\quad \text{and} \quad
\abs{\nabla^k (\omega_{EH,\epsilon} - \omega_{EH,0})} \leq c(k) \sqrt{\epsilon}^{4+k}.
$$
In particular, for small $\epsilon$, $\omega_{EH,\epsilon}$ is a K\"ahler form on the whole $T^*\cx P^1$.
(The letter $c$ will always denote a positive constant whose value changes from line to line
and we sometimes write $c(\dots)$ to insist on the dependence upon some parameters.)

We can now describe precisely our approximately Ricci flat K\"ahler metric $\omega_\epsilon$ on $\hat{X}$. 
Let $\rho$ denote the distance to the singular points in the flat orbifold $\rl \times \tor^3/\pm$. The function $\rho$ 
can also be seen as the $\omega_0$-distance to the exceptional divisors in $\hat{X}$. We will denote the 
connected components of $N := \set{\rho \leq \frac18}$ by $N_{j}$, $1 \leq j \leq 8$, and the remaining 
part of $\hat{X}$ by $W$. Each $N_j$ contains an exceptional divisor $E_j$ and $N_j \backslash E_j$ 
is naturally a punctured ball of radius $\frac18$ and centered in $0$ in 
$\cx^2/\pm \cong T^* \cx P^1 \backslash \cx P^1$. Thanks to a $\frac{1}{\epsilon}$-dilation, we may 
therefore identify $N_j$ with the set $N(\epsilon):=\set{\abs{z} \leq \frac1{8 \epsilon}}$
in $T^* \cx P^1$ and then define $\omega_\epsilon$ on $N_j$ by 
$\omega_\epsilon := \epsilon^2 \omega_{EH,\epsilon}$. On $W$, we then let $\omega_\epsilon$ coincide 
with the flat $\omega_0$. Owing to the shape of $\omega_{EH,\epsilon}$, this defines a smooth K\"ahler metric 
on the whole $\hat{X}$. By construction, $\omega_\epsilon$ is the flat $\omega_0$ for $\rho \geq 2\sqrt{\epsilon}$,
a (scaled) Eguchi-Hanson metric for $\rho \leq \sqrt{\epsilon}$ and obeys 
\begin{equation}\label{estimomega}
\abs{\nabla^k (\omega_{\epsilon} - \omega_{0})} \leq c(k) \epsilon^{2-\frac{k}{2}}
\end{equation}
between these two areas (beware the scaling induces a nasty $\epsilon^{-k-2}$ factor). The Ricci form $\ric_\epsilon$ 
vanishes outside the domain $\set{\sqrt{\epsilon} \leq \rho \leq 2 \sqrt{\epsilon}}$ 
and obeys $\abs{\ric_{\epsilon}} \leq c \epsilon$: it is small. We now wish to deform this approximately Ricci-flat K\"ahler metric into a genuine Ricci-flat K\"ahler metric.

\subsection{The nonlinear equation}

In view of building a Ricci-flat K\"ahler form $\omega = \omega_\epsilon + d d^c \phi$ in the K\"ahler class of $\omega_\epsilon$, we wish to solve  the complex Monge-Amp\`ere equation 
\begin{equation}\label{mongeampere}
\left( \omega_\epsilon + dd^c \psi \right)^2 = e^{f_\epsilon} \omega_\epsilon \land \omega_\epsilon,
\end{equation}
where $f_\epsilon$ is essentially a potential for the Ricci form $\ric_\epsilon$ of $\omega_\epsilon$ : $\ric_\epsilon = \frac12 dd^c f_\epsilon$. This is the classical approach of Aubin-Calabi-Yau (cf. \cite{Joy} for instance). Our framework gives an explicit function $f_\epsilon$, which we can describe as follows. Let $\zeta_1,\zeta_2$ denote the complex coordinates on $\cx^2$ and 
consider the $(2,0)$ form $d\zeta_1 \land d\zeta_2$ on $\rl \times \tor^3 = \cx^2 /\ir^3$. It is still defined on 
$X=\rl \times \tor^3/\pm$ and then lifts into a holomorphic $(2,0)$ form $\Omega$ on $\hat{X}$. Since $\hat{X}$
is a crepant resolution of $X$, this $(2,0)$ form $\Omega$ does not vanish along the exceptional divisors, providing a genuine symplectic $(2,0)$ form. We can then choose the following function $f_\epsilon$:
\begin{equation}\label{potricci}
f_\epsilon := \log \left( \frac{\Omega \land \bar{\Omega} }{ \omega_\epsilon \land \omega_\epsilon } \right).
\end{equation}
In other words, the right-hand side of (\ref{mongeampere}) is simply $\Omega \land \bar{\Omega}$.
Observe $f_\epsilon$ is compactly supported inside $\set{\sqrt{\epsilon} \leq \rho \leq 2 \sqrt{\epsilon}}$ and obeys:
\begin{equation}\label{estimpotricci}
\abs{\nabla^k f_\epsilon} \leq c(k) \epsilon^{2-\frac{k}{2}}.
\end{equation}

\subsection{The linear estimate}

The linearization of the Monge-Amp\`ere operator is essentially the Laplace operator $\Delta_\epsilon$. 
We need to show that it is an isomorphism between convenient Banach spaces and that its inverse 
is uniformly bounded for small $\epsilon$. Let us introduce the relevant functional spaces. We denote 
the $\rl$-variable in $\rl \times \tor^3$ by $t$ and let $r:= \abs{t}$. We may choose a smooth positive 
function $r_\epsilon$ with the following properties: 
\begin{itemize}
 \item it is equal to $r$ wherever $r$ is larger than $1$;
 \item it coincides with the distance $\rho$ to the exceptional divisors wherever $2\epsilon \leq \rho \leq \frac18$;
 \item it is identically $\epsilon$ wherever $\rho \leq \epsilon$;
 \item it is a non-decreasing function of $\rho$ in the domain where $\epsilon \leq \rho \leq 2\epsilon$;
 \item it remains bounded between $\frac18$ and $1$ on the part of $\hat{X}$ where $\rho > \frac18$ and $r < 1$. 
\end{itemize}
Let $k$ be a nonnegative integer and $\alpha$ be a number in $(0,1)$. Given positive real numbers $a$ and $b$,
we let $w_{\epsilon,i}$ denote the continuous function which coincides with
$r_\epsilon^{a+i}$ on $W$ and $r_\epsilon^{b+i}$ on $N$. Then we define the Banach
space $C^{k,\alpha}_{\epsilon,a,b}$ as the set of $C^{k,\alpha}$ functions $u$ for
which the following quantity is finite:
\begin{multline*}
\norm{u}_{C^{k,\alpha}_{\epsilon,a,b}} :=  
\sum_{i=0}^k \sup \abs{w_{\epsilon,i} \nabla_\epsilon^i u}_\epsilon \\
+ \sup_{d_\epsilon(x,y) < \inj_\epsilon} \abs{  \min(w_{\epsilon,k+\alpha}(x),w_{\epsilon,k+\alpha}(y)) \frac{\nabla^k_\epsilon u(x) - \nabla^k_\epsilon u(y)}{d_\epsilon(x,y)^\alpha} }_\epsilon.
\end{multline*}
This formula provides the norm. The subscripts $\epsilon$ mean that everything is computed with respect to the metric $g_\epsilon$; in particular, $\inj_\epsilon$ denotes the \emph{positive} injectivity radius of $g_\epsilon$. In this definition, since $d_\epsilon(x,y) < \inj_\epsilon$, we can compare the values of $\nabla^k u_\epsilon$ at $x$ and $y$ through the parallel transport along the \emph{unique} minimizing geodesic from $x$ to $y$. 

On the exterior domain $W$, a $C^{k,\alpha}_{\epsilon,a,b}$ control on $u$ means it decays like $r^{-a}$ (with a 
corresponding natural estimate on the derivatives). As expected, the parameter $\epsilon$ only matters near the exceptional
divisors. Recall each $N_{j}$ is identified with the domain $N(\epsilon):=
\set{\abs{z}\leq \frac{1}{8\epsilon}}$ in $T^* \cx P^1$. Accordingly, we may carry any function $u$ on
$N_{j}$ to a function $u_\epsilon$ on $N(\epsilon)$. A $C^{0}_{\epsilon,a,b}$ control on $u$ basically means $\abs{u_\epsilon} \leq c \epsilon^{-b} \abs{z}^{-b}$ for $2 \leq \abs{z} \leq \frac{1}{8\epsilon}$ and $\abs{u_\epsilon} \leq c \epsilon^{-b}$ where $\abs{z} \leq 2$. Note that standard (scaled) Schauder estimates imply the following control:
\begin{equation}\label{schauder}
\norm{u}_{C^{k+2,\alpha}_{\epsilon,a,b}} \leq c(k,\epsilon) \; \left( \norm{u}_{C^{0}_{\epsilon,a,b}} 
+ \norm{\Delta_\epsilon u}_{C^{k,\alpha}_{\epsilon,a+2,b+2}}\right).
\end{equation}
In this asymptotically cylindrical setting, we may have used exponential weights instead of polynomial 
weights. We have made this choice because 1) these weights suffice for our purpose here and 2) 
they will be adapted to the other settings (ALG, ALF, cf. appendix).

The relevant Banach spaces for us will turn out to be 
$$
E^{a,b}_\epsilon := \rl \tilde{r} \oplus C^{2,\alpha}_{\epsilon, a,b} 
\quad \text{and} \quad F^{a,b}_\epsilon := C^{0,\alpha}_{\epsilon, a+2,b+2}. 
$$
The notation $\tilde{r}$ stands for the function $\phi \circ r$,
where $\phi$ is any smooth non-decreasing function on $\rl_+$ which 
is identically $1$ on $[0,1]$ and coincides with the identity on $[2,+\infty)$.
We fix the norm on $\rl \tilde{r}$ by setting $\norm{\tilde{r}}:=1$ and we endow 
$E^{a,b}_\epsilon$ with the sum of the norms of the two factors. Note we have dropped 
the dependence on $\alpha$, which will not play a major role. The Laplacian 
$\Delta_\epsilon$ defines a bounded operator between $E^{a,b}_\epsilon$ 
and $F^{a,b}_\epsilon$ and we need to invert it. 

\begin{lem}\label{surj}
The map $\appli{\Delta_\epsilon}{E^{a,b}_\epsilon}{F^{a,b}_\epsilon}$ is an isomorphism.
\end{lem}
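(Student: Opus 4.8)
\noindent\emph{Proof strategy.} The plan is to deduce the isomorphism from a weighted Fredholm statement, an explicit index computation, an injectivity argument via the flux, and a one-dimensional solvability correction supplied precisely by the summand $\rl\tilde r$. First I would record the mapping properties of $\Delta_\epsilon$ on the pure decay space. Since $\hat X$ is complete with a single end on which $g_\epsilon$ is \emph{exactly} flat cylindrical (the construction gives $\omega_\epsilon=\omega_0$ for $\rho\geq 2\sqrt\epsilon$), the model operator at infinity is $\partial_t^2+\Delta_{\tor^3}$, whose polynomial indicial roots, coming from the $\tor^3$-constant mode, are $\gamma=0$ and $\gamma=1$. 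For $a>0$ and $b$ in the admissible range of the appendix, the scaled Schauder estimate~(\ref{schauder}) yields the a priori bound making $\appli{\Delta_\epsilon}{C^{2,\alpha}_{\epsilon,a,b}}{C^{0,\alpha}_{\epsilon,a+2,b+2}}$ a Fredholm operator with closed range.

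The decisive point is the index, and here the $\pm$-quotient enters. Under the involution $(t,x)\mapsto(-t,-x)$ the coordinate $t$ is \emph{odd}, so the linear harmonic mode $t$ (root $\gamma=1$) does not descend to $\hat X$; only the even function $r=\abs t$ does. Hence the only globally defined, polynomially bounded harmonic functions on the end are the constants, and an integration-by-parts/duality argument identifies the cokernel of $\Delta_\epsilon$ on $C^{2,\alpha}_{\epsilon,a,b}$ with the single obstruction $g\mapsto\int_{\hat X}g$. The index is therefore $-1$, and adjoining the line $\rl\tilde r$ (whose growth $\tilde r\sim\abs t$ realizes exactly the missing root $\gamma=1$, with a source concentrated over the resolved core) restores index $0$.

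For injectivity, let $u=c\tilde r+v$ with $v\in C^{2,\alpha}_{\epsilon,a,b}$ and $\Delta_\epsilon u=0$. The flux $\oint_{\{r=R\}}\partial_\nu u$ vanishes by the divergence theorem, since $\hat X$ has no inner boundary and $\Delta_\epsilon u=0$; on the other hand, on the flat end one has $\partial_\nu\tilde r=1$ while $\partial_\nu v=\OO(R^{-a-1})$, so the same flux equals $c\,\vol(\tor^3)+\OO(R^{-a-1})$. Letting $R\to\infty$ forces $c=0$, after which $u=v$ is harmonic and tends to $0$ at infinity, so the maximum principle on the exhaustion $\{r\le R\}$ gives $u\equiv 0$.

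Surjectivity follows from the same integral. Since $\tilde r$ is harmonic off the compact transition region $\{1\le r\le 2\}$, the function $\Delta_\epsilon\tilde r$ is smooth and compactly supported with $\int_{\hat X}\Delta_\epsilon\tilde r=\oint_{\{r=R\}}\partial_\nu\tilde r=\vol(\tor^3)\neq 0$; thus for $g\in F^{a,b}_\epsilon$ the corrected datum $g-\vol(\tor^3)^{-1}\big(\int_{\hat X}g\big)\,\Delta_\epsilon\tilde r$ has vanishing total integral, hence lies in the range of $\Delta_\epsilon$ on $C^{2,\alpha}_{\epsilon,a,b}$, and adding back the multiple of $\tilde r$ yields a preimage in $E^{a,b}_\epsilon$. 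I expect the genuine obstacle to be the weighted Fredholm input rather than this bookkeeping: one must verify that no further invariant slow mode survives the $\pm$-quotient (so that the cokernel is \emph{exactly} the constants) and that the chosen weights $a,b$ truly avoid the indicial roots of the ALH end while behaving well in the ALE-type divisor regions, which is precisely the analysis packaged in the appendix.
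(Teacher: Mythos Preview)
Your proof is correct and essentially the same as the paper's. The injectivity via the flux integral is identical, and for surjectivity the paper likewise invokes the weighted analysis of the appendix---producing a solution $u=\lambda\tilde r+\eta+v$ and then dropping the harmonic constant $\eta$ (since there is only one end)---which is precisely the same content as your cokernel/obstruction formulation.
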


\begin{proof}
Any function $u$ in $E^{a,b}_\epsilon$ has the following asymptotics: $u = \lambda r + \OO(r^{-a})$. 
When $u$ is harmonic, integration by part yields for large $R$:
$$
0= \int_{B_R} \Delta_\epsilon u = - \int_{\partial B_R} \partial_r u = - \lambda + \OO(R^{-a-1}),
$$
with $B_R = \set{r\leq R}$. So $\lambda=0$ and $u$ goes to zero at infinity, hence vanishes. This proves injectivity. Let us turn to surjectivity. 
Given $f \in F^{a,b}_\epsilon$, weighted analysis (cf. appendix) provides a solution $u$ to $\Delta_\epsilon u=f$, with 
$$
u = \lambda \tilde{r} + \eta + v
$$
for some constants $\lambda$ and $\eta$, and some $C^{2,\alpha}$ function $v$ bounded in $C^{2,\alpha}_{\epsilon,a,b}$ (this estimate follows from (\ref{estimG}) and (\ref{schauder})). Of course, the constant $\mu$ can be dropped (for there is only one end) and we have proved the surjectivity. Note the parameter $b$ is not relevant here. It enters the picture only when one seeks uniform bounds, which is the next topic we tackle. 
\end{proof}

\begin{lem}\label{estimlin}
Let $a$ and $b$ be positive numbers, with $b < 2$. Then there is a constant $c$ such 
that for every small $\epsilon$ and every $u \in E^{a,b}_\epsilon$,
$$
\norm{u}_{E^{a,b}_\epsilon} \leq c \norm{\Delta_\epsilon u}_{F^{a,b}_\epsilon}.
$$
\end{lem}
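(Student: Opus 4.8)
The plan is to prove the uniform bound by contradiction, through a blow-up (concentration) analysis. The guiding principle is that, as $\epsilon\to 0$, the operator $\Delta_\epsilon$ degenerates onto only three possible model operators, each of which is injective on the relevant weighted space precisely when $a>0$ and $0<b<2$; a failure of the uniform estimate would produce a nontrivial element of one of these kernels.

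\emph{Reduction of the linear term.} Suppose the estimate fails. Then I get sequences $\epsilon_n\to 0$ and $u_n\in E^{a,b}_{\epsilon_n}$ with $\norm{u_n}_{E^{a,b}_{\epsilon_n}}=1$ and $\norm{\Delta_{\epsilon_n}u_n}_{F^{a,b}_{\epsilon_n}}\to 0$. Writing $u_n=\lambda_n\tilde r+v_n$ with $v_n\in C^{2,\alpha}_{\epsilon_n,a,b}$, so that $\abs{\lambda_n}+\norm{v_n}_{C^{2,\alpha}_{\epsilon_n,a,b}}=1$, the first step is to dispose of $\lambda_n$. Exactly the integration by parts of Lemma \ref{surj}, performed on a large ball $B_R$ of the cylindrical end and letting $R\to\infty$, expresses $\lambda_n\,\vol(\tor^3/\pm)$ as the total integral of $\Delta_{\epsilon_n}u_n$; since $a>0$ and $b<2$ this integral is absolutely controlled by $\norm{\Delta_{\epsilon_n}u_n}_{F^{a,b}_{\epsilon_n}}$ uniformly in $n$ (the bubble contribution is $O(1)$ precisely because $b<2$). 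Hence $\lambda_n\to 0$, and absorbing the compactly supported term $\Delta_{\epsilon_n}(\lambda_n\tilde r)=O(\abs{\lambda_n})$ into the right-hand side, I am reduced to $v_n$ with $\norm{v_n}_{C^{2,\alpha}_{\epsilon_n,a,b}}\to 1$ and $\norm{\Delta_{\epsilon_n}v_n}_{F^{a,b}_{\epsilon_n}}\to 0$.

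\emph{Blow-up.} I then pick points $x_n$ almost realizing the weighted $C^0$ part of $\norm{v_n}$ (the H\"older seminorm is handled identically by the rescaled Schauder estimate (\ref{schauder})), recenter at $x_n$ and rescale $g_{\epsilon_n}$ by the natural local scale, and divide $v_n$ by $w_{\epsilon_n,0}(x_n)$. Depending on the size of $\rho(x_n)$ relative to $\epsilon_n$ one meets exactly three model geometries:
\begin{itemize}
\item if $\rho(x_n)$ stays bounded below, the metrics converge to the \emph{fixed} flat orbifold $\rl\times\tor^3/\pm$ (a cylinder away from the eight cone points $\cx^2/\pm$);
\item if $\rho(x_n)=O(\epsilon_n)$, rescaling by $\epsilon_n^{-1}$ yields the \emph{fixed} Eguchi--Hanson space $T^*\cx P^1$, asymptotic to $\cx^2/\pm$;
\item in the neck regime $\epsilon_n\ll\rho(x_n)\ll 1$, rescaling by $\rho(x_n)^{-1}$ yields the scale-invariant flat cone $\cx^2/\pm$.
\end{itemize}
In each regime the weight $w_{\epsilon_n,0}$ is comparable to a fixed power of the model radial variable, so the rescaled functions are locally uniformly bounded; by interior elliptic estimates and Arzel\`a--Ascoli they converge in $C^2_{loc}$ to a harmonic $v_\infty\not\equiv 0$ on the model, with $\abs{v_\infty}\le c\,r^{-a}$ at the cylindrical end, respectively $\abs{v_\infty}\le c\,\rho^{-b}$ near the cone points or at infinity of the bubble.

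\emph{Vanishing and conclusion.} It then remains to show $v_\infty\equiv 0$ in each case, which is where $a>0$ and $b<2$ enter through the indicial roots of $\Delta$ on $\cx^2/\pm$: the $\pm$-invariant homogeneous harmonic functions have rates in $\set{0,2,4,\dots}\cup\set{-2,-4,\dots}$. In the neck case $\abs{v_\infty}=\OO(\rho^{-b})$ with $0<b<2$ lies strictly between the roots $0$ and $2$, hence $v_\infty\equiv 0$. The same root count shows that on the Eguchi--Hanson bubble a harmonic function decaying like $\rho^{-b}$ with $b<2$ can carry neither the first decaying mode $\rho^{-2}$ nor (since $b>0$) a constant, so it vanishes. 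Finally, on the flat orbifold the cone singularities are removable because $b<2$ is below the fundamental rate $2$, and $a>0$ forces decay at the single end, so the maximum principle gives $v_\infty\equiv 0$. Each case contradicts $v_\infty\not\equiv 0$. The main obstacle is precisely this concentration bookkeeping: one must verify that every blow-up scale degenerates onto one of the three listed models and that the choice of center keeps the rescaled limit nontrivial. The upper bound $b<2$ is sharp here, since for $b\ge 2$ the Green's function $\rho^{-2}$ survives on the bubble and the uniform estimate genuinely fails.
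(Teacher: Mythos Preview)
Your argument is the same contradiction/blow-up scheme as the paper's, and your preliminary step of killing $\lambda_n$ directly by integrating $\Delta_{\epsilon_n}u_n$ over $\hat X$ (using $a>0$ for the end and $b<2$ for the bubbles to get a uniform $L^1$ bound) is a clean variant of what the paper does only in the limit. The bubble and neck regimes, and the removable-singularity argument on the orbifold, match the paper.

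There is, however, a genuine gap in your case split: you do not treat the possibility that the point $x_n$ realizing the weighted supremum escapes to infinity along the cylindrical end, i.e.\ $\rho(x_n)\geq\tfrac18$ but $r(x_n)\to\infty$. If you recenter at $x_n$ here, the pointed limit is the full flat cylinder $\rl\times\tor^3$, not the ``fixed'' orbifold; and since $w_{\epsilon_n,0}(x_n)=r(x_n)^a$ while $w_{\epsilon_n,0}(x)/w_{\epsilon_n,0}(x_n)\to 1$ on compact sets of the translated cylinder, the limit function is merely a \emph{bounded} harmonic function on $\rl\times\tor^3$. That forces it to be constant, but not zero --- and your normalization makes it nonzero, so no contradiction arises. (If instead you do not recenter, the limit on the fixed orbifold vanishes, but $x_n$ has left every compact set and again nothing is contradicted.) The paper handles this regime not by a Liouville argument but through the Green-operator bound of Lemma~\ref{estimG}: once $v_i\to 0$ on a fixed compact set is known from the orbifold limit, one sets $w_i=(1-\tau)v_i$ for a suitable cutoff $\tau$ and uses
\[
\norm{r^a w_i}_{L^\infty}=\norm{r^a G_{R_0}\Delta_0 w_i}_{L^\infty}\leq c\,\norm{r^{a+2}\Delta_0 w_i}_{L^\infty}\longrightarrow 0,
\]
because $\Delta_0 w_i$ is $(1-\tau)\Delta_{\epsilon_i}v_i$ plus commutator terms supported where $v_i\to 0$. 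You should either insert this step, or add a fourth regime with an independent argument ruling out a nonzero constant limit on the cylinder.
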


\begin{proof}
Assume this statement is false. Then there is a sequence of positive numbers $\epsilon_i$ and 
a sequence of functions $u_i$ such that: $(\epsilon_i)_i$ goes to zero, $\norm{u_i}_{E^{a,b}_{\epsilon_i}}=1$
and $\norm{\Delta_{\epsilon_i} u_i}_{F^{a,b}_{\epsilon_i}}$ goes to zero. Then $u_i = \lambda_i \tilde{r} + v_i$,
with $(\lambda_i)$ bounded in $\rl$ and $(v_i)$ uniformly bounded in $C^{2,\alpha}_{\epsilon_i, a,b}$. In particular, we may assume that $(\lambda_i)$ goes to some $\lambda_\infty$ and, with Arzela-Ascoli theorem, that $(v_i)$ converges to $v_\infty$ on compact subsets of $\hat{X}$ minus the exceptional divisors. Moreover, $u_\infty := \lambda_\infty \tilde{r} + v_\infty$ is a harmonic function on the regular part of $\rl \times \tor^3/\pm$ (with its flat metric) and is bounded by a constant times $\rho^{-b}$ near each singular point, which is less singular than the Green function on $\rl^4$, for $b<2$. It follows that $u_\infty$ can be lifted into a smooth harmonic function $\hat{u}_\infty$ on the whole $\rl \times \tor^3$. Moreover, $r^{a} v_\infty$ is bounded so $\hat{u}_\infty = \lambda_\infty r + \OO(r^{-a})$. Integration by part yields for large $R$:
$$
0= \int_{B_R} \Delta \hat{u}_\infty = - \int_{\partial B_R} \partial_r \hat{u}_\infty = -2 \lambda_\infty + \OO(R^{-a}).
$$
So $\lambda_\infty=0$ and $\hat{u}_\infty$ goes to zero at infinity, hence vanishes: $v_\infty=0$. It follows that $\norm{v_i}_{C^{0}(K)}$ goes to zero for every compact set $K$ outside the exceptional divisors. Using Lemma \ref{estimG}, we then see that, for any smooth and compactly supported function $\tau$ which is identically $1$ near the exceptional divisors, the functions $w_i := (1-\tau) v_i$ satisfy 
$$
\norm{r^a w_i}_{L^\infty} = \norm{r^a G_{R_0} \Delta_0 w_i}_{L^\infty} \leq c \norm{r^a \Delta_0 w_i}_{L^\infty} \stackrel{i \to \infty}{\To} 0.
$$
With the scaled Schauder estimate
$$
\norm{v_i}_{C^{2,\alpha}_a(B_{2 R_0}^c)} \leq \; c \left( \norm{r^a v_i}_{L^\infty(B_{R_0}^c)} + \norm{\Delta_0 v_i}_{C^{0,\alpha}_a(B_{R_0}^c)} \right) 
$$
(where we dropped the irrelevant $\epsilon$ and $b$) and the convergence over compact subsets, we therefore obtain that $\norm{v_i}_{C^{2,\alpha}_a(W)}$ goes to zero; and more generally, this remains true when $W$ is replaced by the 
complement of any compact neighbourhood of the exceptional divisors.

Next, we focus on what happens around the $j$th exceptional divisor. We consider the function $V_i := \epsilon_i^b (v_i)_{\epsilon_i}$, defined on $N(\epsilon_i)$. The bound on $\norm{v_i}_{C^{2,\alpha}_{\epsilon_i,b}(N_{j})}$ makes it possible to extract a subsequence $V_i$ which converges to a function $V_\infty$ on every compact subset of $T^* \cx P^1$. Moreover, this function $V_\infty$ is harmonic and uniformly bounded by a constant times $(1+\abs{z})^{-b}$. Since $b$ is positive, we deduce
that $V_\infty=0$, as above. Now assume that $\norm{V_i}_{C^{0}_{b}}$ remains bounded from below (up 
to subsequence). Then we can find a sequence of points $p_i$ such that $R_i := \abs{z(p_i)}$ goes to infinity and $R_i^b \abs{V_i(p_i)}$ is bounded from below. Let us rescale the metric into $\xi_i := R_i^{-2} \omega_{\epsilon_i}$ and consider the functions $W_i := R_i^b V_i$. Since $\omega_{\epsilon_i}$ is closer and closer to the Eguchi-Hanson metric and blowing down the Eguchi-Hanson metric results in the orbifold $\rl^4/\pm$, we see that $\xi_i$ converges to the flat metric on the orbifold $\rl^4/\pm$. Each $W_i$ is defined on a ball of radius $\frac{1}{8 \epsilon_i R_i}$ with respect to $\xi_i$ and we may assume that $\epsilon_i R_i$ goes to zero, in view of the convergence of $v_i$ to zero away from the exceptional divisors (proved in the first step). Since $(W_i)$ is uniformly bounded in $C^{2,\alpha}_b$ (defined with respect to $\xi_i$), we can again make it converge to a harmonic function $W_\infty$ on $\rl^4/\pm$ minus the origin. Let us denote by $\abs{x}$ the distance to the origin in $\rl^4/\pm$. By construction, $W_\infty(x_\infty)>0$ for some point $x_\infty$ with $\abs{x_\infty}=1$. But one can check that $\abs{x}^b W_\infty(x)$ is uniformly bounded, so that  the harmonic function $W_\infty$ is bound to vanish, as above. This is a contradiction, so $\norm{V_i}_{C^{0}_{b}}$ goes to zero. Scaled Schauder estimates imply that $\norm{V_i}_{C^{2,\alpha}_{b}}$ and therefore $\norm{v_i}_{C^{2,\alpha}_{\epsilon_i,b}(N_{j})}$ go to zero. Playing the same game around each component of the exceptional divisor, we obtain a contradiction.
\end{proof}

\subsection{The deformation}

We will use the following version of the implicit function theorem, whose proof is an immediate application of Banach's fixed point theorem. 

\begin{lem}\label{implicite}
Let $\appli{\Phi}{E}{F}$ be a smooth map between Banach spaces
and define $Q := \Phi - \Phi(0) - d_0\Phi$. We assume there are positive constants $q$, $r_0$ and $c$ such that
\begin{enumerate}
 \item $\norm{Q(x) -Q(y)} \leq q \norm{x-y} \left( \norm{x} + \norm{y} \right)$ for every $x$ and $y$ in $B_E(0,r_0)$ ;
 \item $d_0\Phi$ is an isomorphism with inverse bounded by $c$.
\end{enumerate}
Pick $r \leq \min(r_0,\frac1{2qc})$ and assume $\norm{\Phi(0)} \leq \frac{r}{2 c}$. Then the equation $\Phi(x)=0$
admits a unique solution $x$ in $B_E(0,r)$.  
\end{lem}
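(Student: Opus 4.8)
The plan is to prove Lemma \ref{implicite}, which is a quantitative implicit/inverse function theorem stated as a fixed-point result. I would set up the problem by rewriting the equation $\Phi(x)=0$ as a fixed-point equation. Since $d_0\Phi$ is invertible with bounded inverse, write $\Phi(x) = \Phi(0) + d_0\Phi(x) + Q(x)$, so that $\Phi(x)=0$ is equivalent to $d_0\Phi(x) = -\Phi(0) - Q(x)$, i.e. to $x = T(x)$ where
\[
T(x) := -(d_0\Phi)^{-1}\bigl( \Phi(0) + Q(x) \bigr).
\]
The strategy is then to verify that $T$ is a contraction of the closed ball $\overline{B_E(0,r)}$ into itself, and invoke Banach's fixed point theorem; the unique fixed point of $T$ is exactly the unique solution of $\Phi(x)=0$ in that ball.

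The two things to check are stability (that $T$ maps $\overline{B_E(0,r)}$ into itself) and contractivity. For contractivity, I would estimate, for $x,y \in B_E(0,r)$,
\[
\norm{T(x)-T(y)} = \norm{(d_0\Phi)^{-1}\bigl(Q(x)-Q(y)\bigr)} \leq c \, \norm{Q(x)-Q(y)} \leq c q \norm{x-y}\bigl(\norm{x}+\norm{y}\bigr),
\]
using hypothesis (2) for the bound $c$ on the inverse and hypothesis (1) for the quadratic estimate on $Q$. Since $\norm{x}+\norm{y} \leq 2r$ on the ball, this gives $\norm{T(x)-T(y)} \leq 2cqr \norm{x-y}$, and the choice $r \leq \frac{1}{2qc}$ forces $2cqr \leq \frac12$, so $T$ is a contraction with constant at most $\frac12$. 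For stability, note that $Q(0)=\Phi(0)-\Phi(0)-d_0\Phi(0)=0$, so applying the contraction estimate with $y=0$ gives $\norm{T(x)-T(0)} \leq \frac12 \norm{x}$; combined with $\norm{T(0)} = \norm{(d_0\Phi)^{-1}\Phi(0)} \leq c\norm{\Phi(0)} \leq c \cdot \frac{r}{2c} = \frac{r}{2}$ (using the hypothesis $\norm{\Phi(0)} \leq \frac{r}{2c}$), I get for $\norm{x}\leq r$
\[
\norm{T(x)} \leq \norm{T(x)-T(0)} + \norm{T(0)} \leq \tfrac12 \norm{x} + \tfrac{r}{2} \leq \tfrac{r}{2} + \tfrac{r}{2} = r,
\]
so $T$ indeed preserves $\overline{B_E(0,r)}$.

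With both properties in hand, Banach's fixed point theorem applies on the complete metric space $\overline{B_E(0,r)}$ and yields a unique fixed point $x$, which is the asserted unique solution. I do not expect any genuine obstacle here: the result is standard and purely formal. The only mild subtlety is bookkeeping the constants so that the single radius condition $r \leq \min(r_0,\frac{1}{2qc})$ together with the smallness assumption on $\norm{\Phi(0)}$ simultaneously deliver both the self-map and the contraction properties; one must also be a little careful that $Q$ is well-defined and satisfies (1) on the whole ball $B_E(0,r_0) \supseteq B_E(0,r)$, which is exactly why the hypothesis is phrased on $B_E(0,r_0)$ with $r \leq r_0$. The smoothness of $\Phi$ is used only to guarantee that $Q$ is well-defined and that the linearization $d_0\Phi$ exists; it plays no role beyond that in the argument.
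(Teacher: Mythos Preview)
Your proof is correct and follows exactly the approach indicated in the paper, which simply states that the lemma is ``an immediate application of Banach's fixed point theorem'' without giving further details. Your reformulation $x = T(x) := -(d_0\Phi)^{-1}(\Phi(0)+Q(x))$ and verification of the self-map and contraction properties is the standard argument the authors had in mind.
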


We apply this to the operator
$$
\Phi_\epsilon \; : \; \psi \mapsto \frac{\left( \omega_\epsilon + dd^c \psi \right)^2}{\omega_\epsilon \land \omega_\epsilon} - e^{f_\epsilon},
$$
between the Banach spaces $E^{a,b}_\epsilon$ and $F^{a,b}_\epsilon$ for some positive number $a$ and a positive number $b$,
with $b<2$. The linearization 
of $\Phi_\epsilon$ is $-\Delta_\epsilon$ so condition (2) stems from Lemmata \ref{surj} and \ref{estimlin}, with a constant $c$ independent of $\epsilon$. 

We need some smallness on $\Phi_\epsilon(0) = 1 - e^{f_\epsilon}$. In view of the shape of $f_\epsilon$, including (\ref{estimpotricci}), we have 
$$
\norm{f_\epsilon}_{C^{0,\alpha}_{\epsilon,a+2,b+2}} \leq c \epsilon^{3 + \frac{b}{2}},
$$
which leads to
$$
\norm{\Phi_\epsilon(0)}_{F^{a,b}_\epsilon} \leq c \epsilon^{3+\frac{b}{2}}.
$$
 
In view of condition (1), observe the non-linear term is the quadratic map given by:
$$
Q_\epsilon(\psi) = \frac{d d^c \psi \land d d^c \psi}{\omega_\epsilon \land \omega_\epsilon}
$$
For every $\psi_1$ and $\psi_2$ in $C^{2,\alpha}_{\epsilon,a,b}$, we have the following estimate:
$$
\norm{Q_\epsilon(\psi_1) - Q_\epsilon(\psi_2)}_{C^{0,\alpha}_{\epsilon,a+2,b+2}} \leq c \, \epsilon^{-b-2}
\norm{\psi_1 - \psi_2}_{C^{2,\alpha}_{\epsilon,a,b}}
\left( \norm{\psi_1}_{C^{2,\alpha}_{\epsilon,a,b}} + \norm{\psi_2}_{C^{2,\alpha}_{\epsilon,a,b}}\right),
$$
where $c$ does not depend on $\epsilon$. This uniform $C^{0,\alpha}_{\epsilon,a+2,b+2}$ control is pretty clear on $W$,
without any $\epsilon$ in the constant, indeed. Near the divisors, we need to compensate the (small) weight, hence this unpleasant 
$\epsilon^{-b-2}$. It is straightforward to extend to the case where $\psi_1$ and $\psi_2$ in $E^{a,b}_\epsilon$: 
$$
\norm{Q_\epsilon(\psi_1) -Q_\epsilon(\psi_2)}_{F^{a,b}_\epsilon} \leq q_\epsilon \norm{\psi_1-\psi_2}_{E^{a,b}_\epsilon} \left( \norm{\psi_1}_{E^{a,b}_\epsilon} + \norm{\psi_2}_{E^{a,b}_\epsilon} \right),
$$
with $q_\epsilon= c \, \epsilon^{-b-2}$. In order to use Lemma \ref{implicite}, we compare $\norm{\Phi_\epsilon(0)}_{F^{a,b}_\epsilon}$
to $\frac1{q_\epsilon}$:
\begin{equation}\label{comparison}
\norm{\Phi_\epsilon(0)}_{F^{a,b}_\epsilon} \leq c \epsilon^{1-\frac{b}{2}} q_\epsilon^{-1}.
\end{equation}
Then, since $b<2$, we see that for small $\epsilon$, $\norm{\Phi_\epsilon(0)}_{F^{a,b}_\epsilon}$ is much smaller than $\frac1{q_\epsilon}$ and we can use Lemma \ref{implicite} to prove the following theorem. Recall that we denote the exceptional divisors by $E_1,\dots,E_p$ and the Poincar\'e dual of $E_j$ by $PD[E_j]$. Observe we can pick different deformation parameters $\epsilon_j = a_j \epsilon$ around the divisors $E_j$, in order to get a larger range of examples. 

\begin{thm}\label{thm:alh}
Let $a_1, \dots, a_8$ denote some positive parameters. Then for every small enough positive number $\epsilon$, there 
is a Ricci-flat K\"ahler form $\omega$ on $\hat{X}$ in the cohomology 
class $[\omega_0]  - \epsilon \sum a_j PD[E_j]$. These provide ALH gravitational 
instantons asymptotic to $\rl_+ \times \tor^3$: $\omega = \omega_0 + \OO(r^{-\infty})$.  
\end{thm}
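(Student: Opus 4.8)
The plan is to deduce the theorem directly from the implicit function theorem of Lemma~\ref{implicite}, applied to $\appli{\Phi_\epsilon}{E^{a,b}_\epsilon}{F^{a,b}_\epsilon}$ for a fixed $a>0$ and some $0<b<2$, and then to read off the geometric conclusions from the Monge--Amp\`ere equation it solves. Condition (2) of the lemma holds because $d_0\Phi_\epsilon=-\Delta_\epsilon$ is an isomorphism with inverse bounded independently of $\epsilon$, by Lemmata~\ref{surj} and~\ref{estimlin}; condition (1) holds with Lipschitz constant $q_\epsilon=c\,\epsilon^{-b-2}$ for the quadratic term $Q_\epsilon$; and the smallness $\norm{\Phi_\epsilon(0)}_{F^{a,b}_\epsilon}\leq c\,\epsilon^{3+\frac{b}{2}}$ together with the comparison~(\ref{comparison}) shows that, since $b<2$, one may take $r$ comparable to $\norm{\Phi_\epsilon(0)}$ and still have $r\leq\min(r_0,\frac{1}{2q_\epsilon c})$ and $\norm{\Phi_\epsilon(0)}\leq\frac{r}{2c}$ once $\epsilon$ is small. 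Lemma~\ref{implicite} then produces a unique small solution $\psi=\psi_\epsilon\in E^{a,b}_\epsilon$ of $\Phi_\epsilon(\psi)=0$, with $\norm{\psi_\epsilon}_{E^{a,b}_\epsilon}=\OO(\epsilon^{3+\frac{b}{2}})$.

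Next I would check that $\omega:=\omega_\epsilon+dd^c\psi_\epsilon$ is a genuine Ricci-flat K\"ahler form. For positivity, the $i=2$ term of the $E^{a,b}_\epsilon$-norm bounds $\abs{dd^c\psi_\epsilon}_\epsilon$ by $\norm{\psi_\epsilon}_{E^{a,b}_\epsilon}/w_{\epsilon,2}$, which near the divisors is at most $\epsilon^{3+\frac b2}/\epsilon^{b+2}=\epsilon^{1-\frac b2}\to0$ (and is even smaller on $W$); hence $\omega>0$ for small $\epsilon$. The equation $\Phi_\epsilon(\psi_\epsilon)=0$ reads $(\omega_\epsilon+dd^c\psi_\epsilon)^2=e^{f_\epsilon}\,\omega_\epsilon\land\omega_\epsilon$, that is, by the very choice~(\ref{potricci}) of $f_\epsilon$, $\omega\land\omega=\Omega\land\bar\Omega$. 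Since $\Omega$ is a nowhere-vanishing holomorphic $(2,0)$-form (it does not vanish on the exceptional divisors because $\hat X$ is crepant), the Ricci form $\ric_\omega=\tfrac12\,dd^c\log\frac{\Omega\land\bar\Omega}{\omega\land\omega}$ vanishes. Thus $\omega$ is Ricci-flat K\"ahler; being a Ricci-flat K\"ahler surface it has holonomy in $SU(2)$, so it is hyperk\"ahler, with $\Omega$ parallel.

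To identify the cohomology class I would use $[\omega]=[\omega_\epsilon]$ (as $dd^c\psi_\epsilon$ is exact) and compute $[\omega_\epsilon]$ by periods. Away from the divisors $\omega_\epsilon=\omega_0$, so $[\omega_\epsilon]-[\omega_0]$ is carried by a neighbourhood of $E_1,\dots,E_8$; pairing with $[E_j]$ gives the $\omega_\epsilon$-area of the zero section, namely $\epsilon_j^2\int_{\cx P^1}\omega_{EH}$, while $\langle[\omega_0],[E_j]\rangle=0$ and $E_j\cdot E_j=-2$. Hence $[\omega_\epsilon]=[\omega_0]-\sum_j c_j\,PD[E_j]$ with each $c_j>0$ proportional to $\epsilon_j^2$, and choosing the local deformation parameters $\epsilon_j$ so that $c_j=\epsilon\,a_j$ realizes the prescribed class $[\omega_0]-\epsilon\sum a_j\,PD[E_j]$ for any positive $a_j$ and all small $\epsilon$.

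Finally I would establish the sharp asymptotics. On the end (large $r$) one has $\omega_\epsilon=\omega_0$ flat and $f_\epsilon\equiv0$, since $f_\epsilon$ is supported in $\set{\sqrt\epsilon\leq\rho\leq2\sqrt\epsilon}$; there $\psi_\epsilon$ solves the homogeneous equation $(\omega_0+dd^c\psi)^2=\omega_0\land\omega_0$, whose linearization is a positive multiple of $\Delta_0$ and whose nonlinearity is quadratic in $dd^c\psi$. Because on the flat cylinder $\rl_+\times\tor^3$ the Laplacian has a spectral gap, a decaying harmonic function must decay exponentially; via the exponentially weighted analysis of the appendix and a Fourier decomposition on $\tor^3$, the polynomial decay $\psi_\epsilon=\OO(r^{-a})$ therefore bootstraps to exponential decay of $\psi_\epsilon$ and of all its derivatives. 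Thus $dd^c\psi_\epsilon=\OO(r^{-\infty})$ and $\omega=\omega_0+\OO(r^{-\infty})$, and since $\omega$ is uniformly equivalent to the complete metric $g_\epsilon$ and converges to the flat cylinder at infinity, it is a complete hyperk\"ahler metric with curvature decaying to $0$, i.e. an ALH gravitational instanton asymptotic to $\rl_+\times\tor^3$. I expect this last step --- upgrading the polynomial weights used to run the fixed-point scheme to the $\OO(r^{-\infty})$ decay --- to be the only real obstacle, since existence is a direct application of Lemma~\ref{implicite}; it is precisely where one must invoke the cylinder's spectral gap rather than the polynomial weights.
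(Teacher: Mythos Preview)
Your argument follows the paper's closely: the existence step is the same application of Lemma~\ref{implicite}, and your treatments of positivity (via the quantitative bound $|dd^c\psi_\epsilon|_\epsilon\leq c\,\epsilon^{1-b/2}$) and of the asymptotic bootstrap are legitimate, slightly more explicit variants of what the paper does (the paper argues positivity by noting that $\omega$ is non-degenerate everywhere by~(\ref{mongeampere}) and positive near infinity, hence positive everywhere; and it refers the $\OO(r^{-\infty})$ decay to Proposition~\ref{sauts} and the remark following it). The one substantive divergence is the hyperk\"ahler conclusion. The paper proves that $\hat X$ is simply connected---retracting $\rl\times\tor^3/\pm$ onto $\tor^3/\pm$ and covering the latter by two open sets each retracting onto $\tor^2/\pm\cong\sph^2$---and then uses that a Ricci-flat K\"ahler structure on a simply-connected four-manifold is hyperk\"ahler. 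You bypass $\pi_1$ and appeal to $\Omega$ directly; this route is valid, but your sentence ``being a Ricci-flat K\"ahler surface it has holonomy in $SU(2)$'' is wrong as written. The correct logic is: $\omega\land\omega=\Omega\land\bar\Omega$ forces $|\Omega|_\omega$ to be constant, hence $\Omega$ is parallel, hence the holonomy lies in $SU(2)$. One further minor slip: you write $\psi_\epsilon=\OO(r^{-a})$, but $\psi_\epsilon=\lambda\tilde r+v\in\rl\tilde r\oplus C^{2,\alpha}_{\epsilon,a,b}$ may carry a nonzero linear piece; what you need (and what suffices) is that $dd^c(\lambda\tilde r)=0$ for $r>2$, so $\omega-\omega_0=dd^c v$ on the end and the bootstrap applies to $v$.
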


The notation $\OO(r^{-\infty})$ denotes a function decaying faster than any (negative) power of $r$ (cf. appendix, after Lemma \ref{sauts}). By working with exponential weights, we may prove that the decay rate to the flat metric is indeed exponential. 

\begin{proof}
We can apply Lemma \ref{implicite} to solve the Monge-Amp\`ere equation (\ref{mongeampere}) for small enough $\epsilon$, which provides a $(1,1)$-form $\omega = \omega_\epsilon + dd^c \psi$, with $\psi \in E^{a,b}_\epsilon$. Since $\omega$ is asymptotic to
$\omega_\epsilon$, it is positive outside a compact set. From (\ref{mongeampere}), we know that $\omega$ is everywhere non-degenerate,
so it must remain positive on $\hat{X}$ : it is a K\"ahler form. Moreover, its Ricci form is given by $\rho_\epsilon - \frac12 dd^c f_\epsilon$, which vanishes in view of our choice of $f_\epsilon$, so $\omega$ is Ricci-flat. Finally, a Ricci-flat K\"ahler structure on a simply-connected four-dimensional manifold is the same as a hyperk\"ahler structure, so we only need to check that $\hat{X}$ is simply connected. First, observe that $X=\rl \times \tor^3 /\pm$ retracts onto $\tor^3 /\pm$, which is covered by two open sets $U_1$ and $U_2$ with connected intersection, such that $U_1$ and $U_2$ are homeomorphic to $[0,\frac12) \times \tor^2 / \sim $ where, for any $x \in \tor^2$, $(0,x) \sim (0,-x)$. Since $[0,\frac12) \times \tor^2 / \sim $ retracts onto $\tor^2/\pm$, which is homeomorphic to the $2$-sphere, we eventually see that $X$, and therefore $\hat{X}$, is simply-connected.
\end{proof}

\section{Other similar constructions}

\subsection{New ALH Ricci-flat manifolds}
\label{sec:new-alh-ricci}
In the previous example, it is natural to try and replace $\tor^3 =
\rl^3 /\ir^3$ by another compact flat orientable three-manifold.  Let
$F_2$ and $F_{2,2}$ denote the smooth flat three-manifolds obtained as
$F_2 := \tor^3/\sigma$ and $F_{2,2}:=\tor^3/\langle\sigma,\tau\rangle$, where $\sigma$ and $\tau$ are
the two commuting involutions
\begin{align*}
  \sigma(x,y,z) &=(x+\frac12,-y,-z+\frac12),\\
  \tau(x,y,z) &=(-x,-y+\frac12,z+\frac12).
\end{align*}
Then $\rl \times F_2$ is naturally a
complex flat K\"ahler manifold---a quotient of $\cx^2$, indeed. More
specifically, if $t$ is the coordinate along $\rl$, we take $t+ix$ and
$y+iz$ as complex coordinates.

We then consider the complex flat K\"ahler orbifold $X_2 := \rl \times F_2 /
\pm$. The reader may check that this involution is well defined and has
four fixed points, yielding rational double points. We blow them up to
obtain the complex manifold $\hat{X_2}$.

\begin{prop}\label{prop:alh2}
  Let $a_1, \dots, a_4$ denote some positive parameters. Then for
  every small enough positive number $\epsilon$, there is a Ricci-flat K\"ahler
  form $\omega$ on $\hat{X_2}$ in the cohomology class $[\omega_0] - \epsilon \sum a_j
  PD[E_j]$. These provide ALH manifolds asymptotic to the flat metric
  on $\rl_+ \times F_2$: $\omega = \omega_0 + \OO(r^{-\infty})$.
\end{prop}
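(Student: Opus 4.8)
The plan is to obtain Proposition \ref{prop:alh2} from Theorem \ref{thm:alh} by a free quotient, using that $X_2$ is itself a quotient of $X_1 = \rl \times \tor^3/\pm$. In the complex coordinates $\zeta_1 = t + ix$, $\zeta_2 = y + iz$ the involution $\sigma$ is the holomorphic affine isometry $(\zeta_1,\zeta_2) \mapsto (\zeta_1 + \tfrac{i}{2},\, -\zeta_2 + \tfrac{i}{2})$, which has no fixed point on $\rl \times \tor^3$ and commutes with $-\id$; hence $X_2 = (\rl \times \tor^3)/\langle \sigma, -\id \rangle$ with $\langle \sigma, -\id\rangle \cong \ir_2 \times \ir_2$. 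First I would check that $\sigma$ permutes the eight fixed points of $-\id$ freely, in four pairs, so that it lifts to a free holomorphic isometric involution $\hat\sigma$ of the resolution $\hat X_1$ which exchanges the eight exceptional divisors in four pairs and realizes $\hat X_2 = \hat X_1/\langle\hat\sigma\rangle$.

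The heart of the matter is to run the construction of Theorem \ref{thm:alh} $\hat\sigma$-equivariantly. I would give the two divisors in each $\hat\sigma$-orbit a common deformation parameter, that is, choose the eight parameters of Theorem \ref{thm:alh} constant on orbits, with four free values $a_1,\dots,a_4$. Since the linear part of $\sigma$ near each singular point is the unitary map $(\zeta_1,\zeta_2)\mapsto(\zeta_1,-\zeta_2)$, which preserves the ($U(2)$-invariant) Eguchi-Hanson potential, $\hat\sigma$ carries each glued Eguchi-Hanson neighbourhood isometrically and biholomorphically onto its partner; with orbit-constant parameters the approximate metric $\omega_\epsilon$, and therefore the Banach spaces $E^{a,b}_\epsilon$ and $F^{a,b}_\epsilon$, are $\hat\sigma$-invariant. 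Although $\hat\sigma^*\Omega = -\Omega$, the volume form $\Omega \land \bar\Omega$ is invariant, so $f_\epsilon$ and the operator $\Phi_\epsilon$ of Lemma \ref{implicite} are invariant as well (this sign is exactly why the quotient will be Ricci-flat K\"ahler but not hyperk\"ahler). Then $\hat\sigma^*\psi$ solves the same Monge-Amp\`ere equation and lies in the same ball $B_{E^{a,b}_\epsilon}(0,r)$, so the uniqueness clause in Lemma \ref{implicite} forces $\hat\sigma^*\psi = \psi$; the Ricci-flat K\"ahler form $\omega = \omega_\epsilon + dd^c\psi$ is therefore invariant and descends to $\hat X_2$. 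Its class is the pullback of $[\omega_0] - \epsilon \sum_{k=1}^4 a_k PD[E_k]$, and since $\sigma$ fixes the $\rl$-factor the end $\rl_+ \times \tor^3$ descends to $\rl_+ \times F_2$, giving $\omega = \omega_0 + \OO(r^{-\infty})$.

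The step I expect to demand the most care is the equivariance bookkeeping: one must verify that every ingredient of Section 1---the cut-off gluing, the potential $f_\epsilon$ built from $\Omega$, and the weighted norms---is genuinely natural under $\hat\sigma$ once the parameters are orbit-constant, so that the uniqueness clause really does produce an invariant solution. No new analysis is needed, however. Indeed, an essentially equivalent route is to repeat the proof of Theorem \ref{thm:alh} verbatim on $\hat X_2$: the only global input that changes is the Liouville-type argument inside Lemma \ref{estimlin}, where the blown-down limit now lives on $\rl \times F_2$ instead of $\rl \times \tor^3$. Since $F_2$ is a compact flat three-manifold, the integration-by-parts computation giving $\lambda_\infty = 0$ and the vanishing of the bounded harmonic limit are unchanged, while the rescaled analysis near each divisor---purely $\rl^4/\pm$ and Eguchi-Hanson---is identical.
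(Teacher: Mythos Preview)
Your proposal is correct and follows exactly the paper's approach: deduce the proposition from Theorem~\ref{thm:alh} by observing that $\sigma$ acts freely on $\hat X_1$ and that, with orbit-constant parameters, the entire construction of Section~1 can be made $\hat\sigma$-invariant so that the resulting Ricci-flat metric descends. You have in fact spelled out the equivariance bookkeeping (the $U(2)$-invariance of Eguchi--Hanson, the sign $\hat\sigma^*\Omega=-\Omega$, and the uniqueness clause in Lemma~\ref{implicite}) more carefully than the paper, which simply asserts that ``it is obvious that the whole construction can be made $\sigma$ invariant''.
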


In this statement, $\omega_0$ denotes again the pull-back of the flat
K\"ahler form on $X_2$ and $r=\abs{t}$. Actually the proposition is a
direct consequence of theorem \ref{thm:alh}, since one can perform
first the quotient by $\pm$ and then by $\sigma$. Indeed the involution $\sigma$
acts freely on $\setR\times\tor^3/\pm$ and on its desingularization, say $\hat X_1$; if
the K\"ahler class in theorem \ref{thm:alh} is invariant, then it is
obvious that the whole construction can be made $\sigma$ invariant, so the
resulting metric descends on $\hat{X_2}$.

It follows that the fundamental group of $\hat{X}_2$ is $\setZ_2$. The
metrics are not hyperk\"ahler because the holomorphic symplectic form
$\Omega$ on $\hat X_1$ satisfies $\sigma^*\Omega=-\Omega$, so there is only a multivalued
symplectic form on $\hat{X_2}$. (This is also apparent on the flat
model $\setR\times F_2$, whose holonomy is not in $SU(2)$.)

\smallskip
Finally the involution $\tau$ is real with respect to the above choice of
complex structure, and acts freely on $X_2$ and $\hat X_2$, on which
it exchanges the four curves $E_j$, say for example $\tau E_1=-E_2$ and
$\tau E_3=-E_4$. This leads to:
\begin{prop}\label{prop:alh22}
  With the same notations as above, if $a_1=a_2$ and $a_3=a_4$ then
  the metric of proposition \ref{prop:alh2} is $\tau$ invariant so
  descends to a Ricci flat, locally K\"ahler metric on $\hat
  X_{2,2}:=\hat X_2/\tau$. This is an ALH metric with an end asymptotic
  to $\setR_+\times F_{2,2}$.
\end{prop}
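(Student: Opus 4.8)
The plan is to run the entire construction of Proposition~\ref{prop:alh2} $\tau$-equivariantly and then to exploit the uniqueness built into Lemma~\ref{implicite}. Since $\tau$ is antiholomorphic it reverses the complex structure $J$, and for a real function $\psi$ one has $\tau^*(dd^c\psi) = -dd^c(\psi\circ\tau)$. Consequently a K\"ahler form $\omega$, with $g(\cdot,\cdot)=\omega(\cdot,J\cdot)$, is $\tau$-invariant \emph{as a Riemannian metric} precisely when $\tau^*\omega=-\omega$: combining $J\tau_* = -\tau_* J$ with $\tau^*\omega=-\omega$ gives $\tau^* g = g$. So the whole point is to produce a Ricci-flat $\omega$ on $\hat X_2$ satisfying $\tau^*\omega=-\omega$; it then descends to a Riemannian metric on the free quotient $\hat X_{2,2}=\hat X_2/\tau$, and Ricci-flatness, being local and isometry-invariant, survives.

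First I would check that the approximate K\"ahler form $\omega_\epsilon$ can be chosen with $\tau^*\omega_\epsilon=-\omega_\epsilon$. Away from the divisors $\omega_\epsilon=\omega_0$ is flat, for which this is automatic. Near the divisors $\omega_\epsilon$ is built from the Eguchi-Hanson potential $\phi_{EH}$, a function of $\abs{z}$ only, hence $\tau$-invariant, giving $\tau^*\omega_{EH,\epsilon}=-\omega_{EH,\epsilon}$ in each patch. The subtlety is that $\tau$ exchanges the patches in pairs ($E_1\leftrightarrow E_2$ and $E_3\leftrightarrow E_4$), so the scaling parameters in two exchanged patches must agree; this is exactly where the hypothesis $a_1=a_2$, $a_3=a_4$ enters. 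With these matched one obtains $\tau^*\omega_\epsilon=-\omega_\epsilon$ globally. For the right-hand side I would use that the holomorphic symplectic form obeys $\tau^*\Omega=\bar\Omega$ (a real involution conjugates $\Omega$), so $\tau^*(\Omega\land\bar\Omega)=\Omega\land\bar\Omega$, while $\tau^*(\omega_\epsilon\land\omega_\epsilon)=\omega_\epsilon\land\omega_\epsilon$; hence $f_\epsilon$ of~(\ref{potricci}) satisfies $f_\epsilon\circ\tau=f_\epsilon$.

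These symmetries make the Monge-Amp\`ere operator $\Phi_\epsilon$ equivariant: from $\tau^*(\omega_\epsilon+dd^c\psi)=-(\omega_\epsilon+dd^c(\psi\circ\tau))$ together with the invariance of $f_\epsilon$ and of $\omega_\epsilon\land\omega_\epsilon$, one reads off $\Phi_\epsilon(\psi\circ\tau)=\Phi_\epsilon(\psi)\circ\tau$. Because $\tau$ is an isometry of $g_\epsilon$ and preserves both $r=\abs{t}$ and the distance $\rho$ to the divisors, the substitution $\psi\mapsto\psi\circ\tau$ is an isometry of the weighted spaces $E^{a,b}_\epsilon$ and $F^{a,b}_\epsilon$, hence preserves the ball $B_E(0,r)$ in which Lemma~\ref{implicite} produces its \emph{unique} solution $\psi$. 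Applying $\tau$ shows $\psi\circ\tau$ is again a solution in that ball, so uniqueness forces $\psi\circ\tau=\psi$. Therefore $\omega=\omega_\epsilon+dd^c\psi$ satisfies $\tau^*\omega=-\omega$ and descends to a Ricci-flat, locally K\"ahler metric on $\hat X_{2,2}$; it is only \emph{locally} K\"ahler because $\tau$ reverses $J$, so there is no global complex structure (equivalently, only a multivalued symplectic form, as already seen for $\hat X_2$).

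It remains to identify the asymptotics. By Proposition~\ref{prop:alh2} the end of $\hat X_2$ is asymptotic to $\setR_+\times F_2$ with $\omega=\omega_0+\OO(r^{-\infty})$, and $\tau$ acts freely there, so the quotient end is $\setR_+\times F_{2,2}$ with $F_{2,2}=F_2/\tau=\tor^3/\langle\sigma,\tau\rangle$, the decay $\omega=\omega_0+\OO(r^{-\infty})$ descending intact; this is the claimed ALH behaviour. I expect the main obstacle to be the bookkeeping of the matched Eguchi-Hanson patches---verifying $\tau^*\omega_\epsilon=-\omega_\epsilon$ globally, which is exactly what pins down $a_1=a_2$, $a_3=a_4$---together with the precise relation $\tau^*\Omega=\bar\Omega$ for the holomorphic symplectic form under the real involution. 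Once these are in hand, the uniqueness argument is immediate and clean.
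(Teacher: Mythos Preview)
Your proposal is correct and follows essentially the same approach as the paper. The paper's own proof is a single sentence---``the whole construction can be made $\sigma$ and $\tau$ invariant (in particular, looking for a $\tau$ invariant potential), so the proposition is immediate''---and your write-up simply unpacks this: you verify $\tau$-equivariance of all the ingredients ($\omega_\epsilon$, $f_\epsilon$, the weighted norms) and then invoke the uniqueness clause of Lemma~\ref{implicite}, whereas the paper phrases it as restricting the fixed-point argument to the closed subspace of $\tau$-invariant potentials from the start; these are equivalent. One tiny imprecision: for the explicit $\tau$ one actually gets $\tau^*\Omega=-\bar\Omega$ rather than $\bar\Omega$, but this is irrelevant since you only use the invariance of $\Omega\land\bar\Omega$.
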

Again the whole construction can be made $\sigma$ and $\tau$ invariant (in
particular, looking for a $\tau$ invariant potential), so the proposition
is immediate.

\subsection{Hitchin's ALF gravitational instanton}\label{hitchin}

In \cite{Hit}, N. Hitchin built a hyperk\"ahler structure on the desingularization
$\hat{X}$ of $\rl^3 \times \sph^1/\pm$ through twistor theory. Beware $\sph^1$ is again seen as 
$\rl/\ir$ (so the involution is not an antipodal map). Our direct analytical approach 
gives another construction of this hyperk\"ahler manifold.

\begin{thm}\label{thm:d2}
Let $a_1, a_2$ denote some positive parameters. Then for every small enough positive number $\epsilon$, there 
is a Ricci-flat K\"ahler form $\omega$ on $\hat{X}$ in the cohomology 
class $[\omega_0]  - \epsilon \sum a_j PD[E_j]$. These provide ALF gravitational 
instantons asymptotic to $\rl^3 \times \sph^1 /\pm $: $\omega = \omega_0 + \OO(r^{-3+\delta})$, for every positive $\delta$.
\end{thm}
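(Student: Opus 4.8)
The plan is to run exactly the four-step scheme used for Theorem \ref{thm:alh}, adapting each step to the ALF model at infinity. First I would build the approximately Ricci-flat K\"ahler form $\omega_\epsilon$ on $\hat X$ by the same patching (\ref{patch}): the flat orbifold metric $\omega_0$ on $\rl^3\times\sph^1/\pm$ away from the singular set, and two scaled Eguchi-Hanson forms $\epsilon^2\omega_{EH,\epsilon}$ glued in over neighbourhoods $N_1,N_2$ of the two fixed points of $\pm$ (here $\sph^1=\rl/\ir$, so $-\id$ fixes $x=0$ and $t\in\{0,\frac12\}$, giving exactly two rational double points, hence two divisors $E_1,E_2$). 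The estimate (\ref{estimomega}) and the vanishing of $\ric_\epsilon$ off the annulus $\set{\sqrt\epsilon\le\rho\le2\sqrt\epsilon}$ go through verbatim. I would then look for $\omega=\omega_\epsilon+dd^c\psi$ solving the same Monge-Amp\`ere equation (\ref{mongeampere}) with the explicit, compactly supported potential $f_\epsilon$ of (\ref{potricci}); since $\rl^3\times\sph^1/\pm$ is again a flat quotient of $\cx^2$, the holomorphic symplectic form $\Omega$ exists on $\hat X$ as before and $f_\epsilon$ obeys (\ref{estimpotricci}).

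The one genuine change is the weighted analysis, because the end is now asymptotic to $\rl^3\times\sph^1$ rather than $\rl\times\tor^3$. Averaging over the $\sph^1$-fibre, the zero mode of $\Delta$ is the \emph{three}-dimensional Laplacian, whose Green function decays like $r^{-1}$; the nonzero Fourier modes decay exponentially and are irrelevant to the indicial behaviour. Consequently---and this is the structural simplification over the ALH case---no obstruction forces the extra summand $\rl\tilde r$: in dimension three the Newtonian potential of a compactly supported source always exists and decays, so I would simply take $E^{a,b}_\epsilon:=C^{2,\alpha}_{\epsilon,a,b}$ and $F^{a,b}_\epsilon:=C^{0,\alpha}_{\epsilon,a+2,b+2}$ with weight $0<a<1$ (and $0<b<2$ near the divisors as before). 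A solution $\psi\in C^{2,\alpha}_{\epsilon,a,b}$ with $a=1-\delta$ decays like $r^{-1+\delta}$, so that $\omega-\omega_0=dd^c\psi=\OO(r^{-3+\delta})$, exactly the asserted rate; the loss $\delta$ reflects that $a$ must stay strictly below the indicial root at $1$ carried by the $r^{-1}$ mode (the ALF mass), and one cannot in general do better since $\int f_\epsilon$ is nonzero.

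The substance is the linear statement that $\Delta_\epsilon:E^{a,b}_\epsilon\to F^{a,b}_\epsilon$ is an isomorphism with $\epsilon$-uniform inverse, i.e. the ALF analogues of Lemmata \ref{surj} and \ref{estimlin}. Injectivity is cleaner here: a harmonic $u\in C^{2,\alpha}_{\epsilon,a,b}$ decays, and since $\hat X$ is complete the flux $\int_{\partial B_R}\partial_r u=\int_{B_R}\Delta_\epsilon u$ vanishes, forcing $u=\OO(r^{-2})$, finite Dirichlet energy and $u\equiv0$. Surjectivity and the scaled Schauder bound (\ref{schauder}) come from the appendix's weighted theory in the ALF setting. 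The uniform estimate is obtained, exactly as in Lemma \ref{estimlin}, by a blow-up/contradiction argument on a putative sequence $(\epsilon_i,u_i)$: the exterior limit is a harmonic function on the regular part of $\rl^3\times\sph^1/\pm$ with $\OO(r^{-a})$ decay, which vanishes by the same flux/energy argument, while rescaling around each divisor produces a harmonic function on $\rl^4/\pm$ bounded by $\abs{x}^{-b}$ that must vanish for $b>0$. I expect this uniform linear estimate, transplanted to the three-dimensional-at-infinity weighted analysis, to be the main obstacle: one must verify that the 3D potential-theoretic estimates (in place of the cylindrical $G_{R_0}$ used for ALH) remain stable under the degeneration $\epsilon\to0$.

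With the linear theory in hand the nonlinear step is identical: $\Phi_\epsilon(\psi)=(\omega_\epsilon+dd^c\psi)^2/(\omega_\epsilon\land\omega_\epsilon)-e^{f_\epsilon}$ has quadratic remainder $Q_\epsilon$ with the same $q_\epsilon=c\,\epsilon^{-b-2}$, and a comparison of type (\ref{comparison}) shows $\norm{\Phi_\epsilon(0)}$ is much smaller than $q_\epsilon^{-1}$ for small $\epsilon$, so Lemma \ref{implicite} produces the solution. The resulting $\omega$ is K\"ahler and Ricci-flat by the choice of $f_\epsilon$, in the class $[\omega_0]-\epsilon\sum a_jPD[E_j]$. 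Finally I would check $\hat X$ is simply connected, hence hyperk\"ahler: writing $\rl^3\times\sph^1/\pm=\rl^4/\Gamma$ with $\Gamma=\langle T,\iota\rangle$, $T(x,t)=(x,t+1)$, $\iota(x,t)=(-x,-t)$ the infinite dihedral group, each Eguchi-Hanson neighbourhood is simply connected and kills the isotropy generator of its link $\sph^3/\pm$, namely $\iota$ and $\iota T$ respectively; since $\iota$ and $\iota T$ generate $\Gamma$, van Kampen gives $\pi_1(\hat X)=1$. This yields the ALF gravitational instanton asymptotic to $\rl^3\times\sph^1/\pm$ with $\omega=\omega_0+\OO(r^{-3+\delta})$.
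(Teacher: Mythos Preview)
Your proposal is correct and follows essentially the same route as the paper: drop the $\rl\tilde r$ summand, work with $0<a<1$ so that the weighted Laplacian is an isomorphism, and otherwise run the argument of Theorem~\ref{thm:alh} verbatim. The only cosmetic difference is the $\pi_1$ computation---the paper simply observes that $\rl^3\times\sph^1/\pm$ retracts onto $\sph^1/\pm\cong[0,\tfrac12]$ and is therefore contractible, whereas you give an explicit van Kampen argument; both work.
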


\begin{proof}
The proof follows the same lines, so we just point out the necessary adaptations. We work with $0<a<1$, so that weighted analysis ensures the Laplacian is an isomorphism. In the definition of $E^{a,b}_\epsilon$, the $\rl \tilde{r}$ must therefore be dropped. 
The analysis can then be done similarly and we just need to check that $\hat{X}$ is simply connected: this is immediate, for $\rl^3 \times \sph^1/\pm$ turns out to be contractible.
\end{proof}

\subsection{ALG gravitational instantons}

In order to build ALG examples with the same technique, we may start from $\rl^2 \times \tor^2$
and consider `crystallographic' quotients. The basic example is $X_2 = \rl^2 \times \tor^2 /\pm$, which is
a complex flat K\"ahler orbifold with four rational double points. When $\tor^2$ is obtained from
a square lattice in $\rl^2$, we may also consider $X_2 = \rl^2 \times \tor^2 /\ir_4$, where the action of
$\ir_4$ is induced by the rotation of angle $\frac{\pi}{2}$ on both factors. In this case, there are two 
$\cx^2 /\ir_4$ singularities and one $\cx^2 /\ir_2$ singularity. Similarly, starting from a 
hexagonal lattice and using rotations of angle $\frac{\pi}{3}$ and $\frac{\pi}{6}$, we may work with 
$X_k = \rl^2 \times \tor^2 /\ir_k$ for $k=3$ or $6$. The orbifold $X_3$ has three $\cx^2 /\ir_3$ singularities,
while $X_6$ has one $\cx^2 /\ir_6$ singularity, one $\cx^2 /\ir_2$ singularity and one $\cx^2 /\ir_3$ singularity. 
In any case, we may blow up the singularities to get the smooth complex manifold $\hat{X_k}$, $k=2,3,4,6$.
Every $\cx^2/\ir_k$ singularity can be endowed with an asymptotically locally Euclidean (ALE) Ricci-flat 
K\"ahler metric: the Gibbons-Hawking or multi-Eguchi-Hanson metrics \cite{GH,Joy}. We may use them in the 
gluing procedure. We do not have an explicit K\"ahler 
potential $\phi$ but for instance Theorem 8.2.3 in \cite{Joy} gives a potential $\phi = \frac{\abs{z}^2}{2} + \OO(\abs{z}^{-2})$, which is what we need. 

\begin{thm}
Pick $k=2,3,4,6$ and let $a_1, \dots, a_p$ denote some positive parameters ($p$ is the number of singularities). 
Then for every small enough positive number $\epsilon$, there 
is a Ricci-flat K\"ahler form $\omega$ on $\hat{X}$ in the cohomology 
class $[\omega_0]  - \epsilon \sum a_j PD[E_j]$. These provide ALG gravitational 
instantons asymptotic to $\rl^2 \times \tor^2 /\ir_k $: $\omega = \omega_0 + \OO(r^{-k-2+\delta})$, for every positive $\delta$.
\end{thm}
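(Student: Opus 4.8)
The plan is to run the gluing-and-deformation scheme of Theorem \ref{thm:alh} essentially verbatim, modifying only the local model near the exceptional divisors and the weighted spaces encoding the ALG geometry at infinity. First I would build the approximately Ricci-flat K\"ahler form $\omega_\epsilon$ on $\hat X$ exactly as in Section 1: away from the divisors $\omega_\epsilon$ is the flat $\omega_0$ pulled back from $\rl^2 \times \tor^2 /\ir_k$, while near each singular point I would glue in a scaled copy $\epsilon^2 \omega_{ALE,\epsilon}$ of the relevant Ricci-flat ALE metric on the minimal resolution of $\cx^2/\ir_k$ (a multi-Eguchi-Hanson or Gibbons-Hawking metric). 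The only input needed is a K\"ahler potential of the form $\phi = \frac{\abs{z}^2}{2} + \OO(\abs{z}^{-2})$, supplied by Theorem 8.2.3 of \cite{Joy}; this matches the asymptotics of $\phi_{EH}$ used in (\ref{patch}), so the cut-off interpolation produces the same estimates (\ref{estimomega}) and (\ref{estimpotricci}) for $\omega_\epsilon$ and the compactly supported Ricci potential $f_\epsilon$. As in Section 1 one takes $\Omega = d\zeta_1 \land d\zeta_2$, which is $\ir_k$-invariant and, the resolution being crepant (the groups $\ir_k \subset SU(2)$ yield rational double points), lifts to a nonvanishing holomorphic symplectic form on $\hat X$; then $f_\epsilon := \log \left( \Omega \land \bar{\Omega} / (\omega_\epsilon \land \omega_\epsilon) \right)$.

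The functional-analytic part requires adapting the weighted H\"older spaces of Section 1 to the two-dimensional base $\rl^2/\ir_k$. I would keep the interior weight $b$ with $0 < b < 2$ unchanged and use an exterior weight $a$ with $0 < a < k$; as in the ALF case (Theorem \ref{thm:d2}), the growing summand $\rl \tilde{r}$ is dropped, since the bounded harmonic functions on the flat ALG model are constants and these are discarded because there is a single end. The surjectivity statement corresponding to Lemma \ref{surj} then follows from the weighted analysis of the appendix, the point being that the flat cone $\rl^2/\ir_k$ has indicial roots $0, \pm k, \pm 2k, \dots$, so a weight $a \in (0,k)$ avoids the spectrum; here the two-dimensionality introduces a single solvability condition, namely that the right-hand side integrate to zero (the planar logarithmic obstruction). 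This condition holds for $f_\epsilon$, and indeed for every nonlinear correction, by Stokes' theorem: since $\Omega$, $\omega_\epsilon$ and all potentials are globally defined with $\Omega$ a holomorphic volume form, one checks $\int_{\hat X} \left( \Omega \land \bar{\Omega} - \omega_\epsilon \land \omega_\epsilon \right) = 0$ and, for decaying $\psi$, $\int_{\hat X} (dd^c \psi)^2 = \int_{\hat X} dd^c \psi \land \omega_\epsilon = 0$. The uniform estimate corresponding to Lemma \ref{estimlin} is proved by the same blow-up-and-contradiction argument, now with three limiting regimes: at the outer scale a harmonic function on the flat ALG space decaying like $r^{-a}$, at the divisor scale a harmonic function on the multi-Eguchi-Hanson space bounded by $(1+\abs{z})^{-b}$, and at an intermediate scale a harmonic function on the flat cone $\rl^4/\ir_k$ minus its vertex bounded by $\abs{x}^{-b}$. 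In each regime a Liouville or removable-singularity argument forces the limit to vanish, giving the contradiction.

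With these two linear results the deformation step is formally identical to Section 1: the quadratic remainder $Q_\epsilon$ obeys the same bound with $q_\epsilon = c\, \epsilon^{-b-2}$, the comparison (\ref{comparison}) still gives $\norm{\Phi_\epsilon(0)}_{F^{a,b}_\epsilon} \leq c\, \epsilon^{1-\frac{b}{2}} q_\epsilon^{-1}$, and since $b < 2$ Lemma \ref{implicite} yields a solution $\psi \in C^{2,\alpha}_{\epsilon,a,b}$ for all small $\epsilon$. Then $\omega = \omega_\epsilon + dd^c \psi$ is Ricci-flat K\"ahler in the class $[\omega_0] - \epsilon \sum a_j PD[E_j]$, and it is hyperk\"ahler because the nonvanishing holomorphic symplectic form $\Omega$ is automatically parallel for a Calabi-Yau metric. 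Finally, choosing $a = k-\delta$ just below the first indicial root gives $\psi = \OO(r^{-k+\delta})$, hence $\omega - \omega_0 = dd^c \psi = \OO(r^{-k-2+\delta})$. The main obstacle is precisely this weighted analysis on the two-dimensional ALG end: one must establish that the operators $\Delta_\epsilon$ are \emph{uniformly} invertible (modulo the one planar obstruction) as $\epsilon \to 0$, and identify the indicial roots of the model cone $\rl^2/\ir_k$ that pin down the polynomial decay $\OO(r^{-k-2+\delta})$ --- in contrast with the exponential decay $\OO(r^{-\infty})$ of the ALH case, where the one-dimensional base leaves no room for such modes.
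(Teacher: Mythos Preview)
Your approach is correct and close to the paper's, but you handle the two-dimensional linear analysis differently. The paper does \emph{not} drop the auxiliary summand as in the ALF case; instead it replaces $\rl\,\tilde r$ by $\rl\,\widetilde{\log r}$ (a smooth function equal to $\log r$ outside a compact set), so that $E^{a,b}_\epsilon=\rl\,\widetilde{\log r}\oplus C^{2,\alpha}_{\epsilon,a,b}$ and $\Delta_\epsilon:E^{a,b}_\epsilon\to F^{a,b}_\epsilon$ is a genuine isomorphism for $0<a<k$. Your route---working on $C^{2,\alpha}_{\epsilon,a,b}$ alone and checking that $\Phi_\epsilon$ lands in the codimension-one zero-integral subspace---is also valid: the key identity $\int_{\hat X}(\Omega\wedge\bar\Omega-\omega_\epsilon^2)=0$ does hold, since $\omega_\epsilon-\omega_0=dd^c\beta$ for a globally defined, compactly supported $\beta$ on $\hat X$, whence $\int(\omega_\epsilon^2-\omega_0^2)=0$ by Stokes, while $\omega_0^2=\Omega\wedge\bar\Omega$ off the (measure-zero) divisors. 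The paper's choice is cleaner because it sidesteps this verification entirely; your choice has the advantage that the solution $\psi$ lies in $C^{2,\alpha}_{\epsilon,a,b}$ with no $\log r$ component---though in the paper's setup that coefficient is forced to vanish a posteriori for the same integral reason, and in any case $dd^c\widetilde{\log r}$ is compactly supported, so the metric decay $\omega-\omega_0=\OO(r^{-k-2+\delta})$ is unaffected.

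A second minor difference: for the hyperk\"ahler conclusion the paper checks that $\hat X_k$ is simply connected, observing that $\tor^2/\ir_k$ is a $2$-sphere (e.g.\ via Gauss--Bonnet for surfaces with cone points). Your argument that the nonvanishing holomorphic $(2,0)$-form $\Omega$ is automatically parallel for a Ricci-flat K\"ahler metric is equally valid and more direct.
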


\begin{proof}
Again, we just point out the necessary adaptations. To begin with, we may check that $\hat{X_k}$ is simply connected: this follows from the fact that $\tor^2 /\ir_k $ is homeomorphic to the two-sphere (it is for instance a consequence of the Gauss-Bonnet formula for closed surfaces with conical singularities). In view of weighted analysis, we can work with $0<a<k$ (because there is no harmonic function on $\rl^2/\ir_k$ that decays like $r^{-a}$) and in the definition of $E^{a,b}_\epsilon$, the $\rl \tilde{r}$ summand has to be replaced by $\rl \widetilde{\log r}$ (a smooth function equal to $\log r$ outside a compact set). 
\end{proof}

\subsection{ALF gravitational instantons of dihedral type}

In this section, we start from the Taub-NUT metric $g_{TN}$ on $\rl^4$. It is given by the following explicit formulas. We refer to \cite{Leb} for details. To begin with, we identify $\rl^4$ with $\cx^2$, with complex coordinates $w_1, w_2$. The Hopf fibration $\appli{\pi=(x_1,x_2,x_3)}{\cx^2}{\rl^3}$ is given by 
$$
x_1 = 2 \re (w_1 \bar{w_2}), \quad x_2 = 2 \im (w_1 \bar{w_2}), \quad x_3 = \abs{w_1}^2 - \abs{w_2}^2.
$$
Let us fix a positive number $m$ and define 
\begin{equation}
V = 1 + \frac{2m}{\abs{x}}, \quad \theta = 8m\frac{\im (\bar{w_1} dw_2)}{\abs{w}^2}.\label{eq:8}
\end{equation}
Then $\frac{\theta}{4m}$ is a connection one-form on the Hopf fibration, with curvature $\frac{*_{\rl^3} dV}{4m}$,
and the Taub-NUT metric is given by 
\begin{equation}
g_{TN} = V (dx_1^2 + dx_2^2 + dx_3^2) + \frac{1}{V} \theta^2.\label{eq:7}
\end{equation}
It turns out to be a complete K\"ahler metric, with respect to the complex structure $I$ mapping $dx_1$ to $dx_2$
and $dx_3$ to $\frac{\theta}{V}$. The corresponding K\"ahler form is 
$$
\omega_{TN} = V dx_1 \land dx_2 + dx_3 \land \theta.
$$
Moreover, it is endowed with a parallel symplectic $(2,0)$ form:
$$
\Omega = (V dx_2 \land dx_3 + dx_1 \land \theta) + i (V dx_3 \land dx_1 + dx_2 \land \theta).
$$ 
This holomorphic symplectic structure is in fact isomorphic to the standard one on $\cx^2$ \cite{Leb}.
The Taub-NUT metric is therefore hyperk\"ahler. 

This hyperk\"ahler structure is preserved by an action of the binary
dihedral group $D_k$ (of order $4(k-2)$) for every $k > 2$.
Explicitly, we see $D_k$ as the group generated by the following diffeomorphisms $\tau$ and $\zeta_k$ of 
$\cx^2$:
$$
\tau (w_1,w_2) = (\bar{w_2},-\bar{w_1}), \quad \zeta_k(w_1,w_2) = (e^{\frac{i\pi}{k-2}} w_1, e^{\frac{i\pi}{k-2}} w_2). 
$$
And the reader may check that this action preserves the whole hyperk\"ahler structure. 

We then let $X$ be the orbifold obtained as the quotient of the Taub-Nut manifold by this action of $D_k$. It has one 
complex singularity, isomorphic to the standard $\cx^2/D_k$ (with $D_k$ in $SU(2)$). Let us denote the minimal resolution 
of $X$ by $\hat{X}$. Again, we need approximately Ricci-flat metrics on $\hat{X}$. Near the exceptional divisor, it is natural 
to glue one of the $D_k$ ALE gravitational instantons introduced by P. B. Kronheimer in \cite{Kro}. This yields a potential 
$\phi_{ALE} = \frac{\abs{z}^2}{2} + \OO(\abs{z}^{-2})$. If we implemented the same gluing as above, we would be in trouble, basically because $g_{TN}$ is not flat. Technically, we would end up with $f_\epsilon = \OO(\epsilon)$ and therefore the exponent in (\ref{comparison}) would be $-\frac{b}{2}$ instead of $1-\frac{b}{2}$; since $b$ has to be positive, this exponent would be negative
and we would never find a ball where to perform the fixed point argument. So we need to refine the gluing. 

Near the origin, we can find complex coordinates $s=(s_1,s_2)$ in which $\omega_{TN}$ is the 
standard flat K\"ahler form $\omega_0 = dd^c\frac{\abs{s}^2}{2}$ up to $\OO(\abs{s}^2)$ and, more precisely, we can find a potential $\phi_{TN}$ for $\omega_{TN}$ with the expansion 
\begin{equation}\label{exptn}
\phi_{TN} = \frac{\abs{s}^2}{2} + \theta_4(s) + \OO(\abs{s}^{5}),
\end{equation}
where $\theta_4(s)$ is a $D_k$-invariant quartic expression in $s$ (and $\bar{s}$).
Moreover, since $\omega_{TN}$ is Ricci-flat, we have the Monge-Amp\`ere equation 
$$
dd^c \log \left( \frac{\omega_{TN}^2}{\omega_0^2}  \right) = 0,
$$
which can be expanded into $d d^c \Delta_{\omega_0} \theta_4 = \OO(\abs{s})$. Since $\Delta_{\omega_0} \theta_4$ is a quadratic form, it is bound to vanish, so $\theta_4$ is harmonic. We then identify a neighbourhood of $0$ in $\hat{X}$ with a large domain in the $D_k$ ALE gravitational instanton, in the same manner as previously, with an $\epsilon$-dilation $s \mapsto \epsilon z$. Then $\theta_4(s)= \epsilon^4 \theta_4(z)$. Since $\theta_4$ is harmonic with respect to the flat metric, we see that $\Delta_{ALE} \theta_4 = \OO(\abs{z}^{-2})$. From weighted analysis, we may then find a $\Delta_{ALE}$-harmonic function $h_4$ with $h_4 = \theta_4(z) + \OO(\abs{z}^{-2})$. Instead of gluing the Taub-NUT metric with the (scaled) ALE metric, we will patch the Taub-NUT potential $\phi_{TN}$ together with $\epsilon^2(\phi_{ALE} + \epsilon^2 h_4)$, namely the approximately Ricci-flat metric $\omega_\epsilon$ we use in this context is given by the potential 
$$
\phi_{\epsilon} := \chi_\epsilon \left[ \epsilon^2(\phi_{ALE} + \epsilon^2 h_4) \right] 
+ (1-\chi_\epsilon) \phi_{TN},
$$
with a cutoff function $\chi_\epsilon$ like in (\ref{patch}). The $(1,1)$-form $\omega_{loc,\epsilon} = dd^c (\phi_{ALE} + \epsilon^2 h_4)$ therefore plays the role of the Eguchi-Hanson metric $\omega_{EH}$ in this context. Beware it depends on $\epsilon$ and defines a K\"ahler metric (a priori) only on some ball $\set{\abs{z} \leq \frac{c}{\epsilon}}$, owing to the estimate 
\begin{equation}\label{perturb}
\abs{\omega_{loc,\epsilon} - \omega_{ALE}} \leq c \epsilon^2 \abs{z}^2.
\end{equation}
We need a control on the function $f_\epsilon$ given by (\ref{potricci}). Since Taub-NUT is Ricci flat, $f_\epsilon$ vanishes 
for $\rho \geq 2\sqrt{\epsilon}$. On $\set{\rho \leq \sqrt{\epsilon}}$, we may use the Ricci-flat $\omega_{ALE}$ and observe 
$$
e^{- f_\epsilon} = \frac{\omega_{loc,\epsilon}^2}{\omega_{ALE}^2} 
= 1 + 2 \epsilon^2 \Delta_{ALE} h_4 + \epsilon^4 \frac{(dd^c h_4)^2}{\omega_{ALE}^2}
= 1 + 0 + \epsilon^4 \OO(\abs{z}^4) = \OO(\rho^4),
$$
which results in $\abs{\nabla^k f_\epsilon} \leq c(k) \epsilon^{2-\frac{k}{2}}$. Finally, on the transition area
$\set{\sqrt{\epsilon} \leq \rho \leq 2\sqrt{\epsilon}}$, we use the expansions (\ref{exptn}) and 
$$
\phi_{ALE} = \frac{\abs{z}^2}{2} + \OO(\abs{z}^{-2}) 
= \epsilon^{-2} \frac{\abs{s}^2}{2} + \epsilon^2 \OO(\abs{s}^{-2})
$$
to obtain 
$$
\omega_\epsilon - \omega_{TN} = dd^c \left( \epsilon^4  \chi_\epsilon \OO(\abs{s}^{-2}) 
+ \chi_\epsilon \OO(\abs{s}^{5}) \right).
$$
Note that without the trick consisting in plugging this function $h_4$ into the potential, 
the last exponent would have been $4$ instead of $5$, resulting in the bad estimate 
$f_\epsilon = \OO(\epsilon)$. Instead, here, we find 
$$
\abs{\nabla^k(\omega_\epsilon - \omega_{TN})} \leq c(k) \epsilon^{\frac{3-k}{2}}
$$
and eventually
$$
\abs{\nabla^k f_\epsilon} \leq c(k) \epsilon^{\frac{3-k}{2}}.
$$
If we follow the proof detailed above, this leads to an exponent $\frac{1-b}{2}$ in (\ref{comparison}), which is good enough since we can choose any $b$ in $(0,1)$. The other arguments can be adapted. In particular, the proof of Lemma \ref{estimlin} still works, because $\omega_{loc,\epsilon}$ gets closer and closer to the ALE metric on larger and larger domains, cf. (\ref{perturb}). 

\begin{thm}\label{thm:dk-alf}
For every small $\epsilon$, there is a Ricci-flat K\"ahler form $\omega$ on $\hat{X}$ in the cohomology class $[\omega_{TN}]  - \epsilon^2 PD[E]$, where $PD[E]$ denotes the Poincar\'e dual of the exceptional divisor. These provide ALF gravitational 
instantons : $\omega = \omega_{TN} + \OO(r^{-3+\delta})$, for every positive $\delta$.
\end{thm}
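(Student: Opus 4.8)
The plan is to run the Aubin--Calabi--Yau deformation argument of Theorem~\ref{thm:alh} essentially verbatim, now based on the refined approximate metric $\omega_\epsilon$ and on the improved potential estimate $\abs{\nabla^k f_\epsilon} \leq c(k)\,\epsilon^{\frac{3-k}{2}}$ established just above. Concretely, I would apply Lemma~\ref{implicite} to the operator $\Phi_\epsilon$ between the Banach spaces $E^{a,b}_\epsilon$ and $F^{a,b}_\epsilon$, checking its two hypotheses together with the smallness of $\Phi_\epsilon(0)$.

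First I would fix the functional framework adapted to the ALF geometry. As in Theorem~\ref{thm:d2}, the dimension at infinity is $m=3$, so the weighted analysis of the appendix makes $\Delta_\epsilon$ an isomorphism for weights $0<a<1$, and the summand $\rl\tilde{r}$ is dropped from $E^{a,b}_\epsilon$; I would also take $b\in(0,1)$. The linear step, namely Lemmata~\ref{surj} and \ref{estimlin}, carries over: the blow-up analysis near the exceptional divisor is unchanged because, by (\ref{perturb}), $\omega_{loc,\epsilon}$ approaches the ALE metric on larger and larger balls, while at infinity the flat model $\rl\times\tor^3$ is replaced by the ALF model (Taub-NUT$/D_k$), on which a harmonic function bounded by $c\,r^{-a}$ with $a>0$ is forced to vanish by the same integration-by-parts Liouville argument. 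This yields an $\epsilon$-uniform inverse bound $c$ for $d_0\Phi_\epsilon=-\Delta_\epsilon$, which is hypothesis~(2).

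Next, hypothesis~(1) is the quadratic estimate on $Q_\epsilon$, giving $q_\epsilon=c\,\epsilon^{-b-2}$ exactly as before. The decisive point is smallness: with the improved $f_\epsilon$, the comparison (\ref{comparison}) now reads $\norm{\Phi_\epsilon(0)}_{F^{a,b}_\epsilon}\leq c\,\epsilon^{\frac{1-b}{2}}q_\epsilon^{-1}$, and since $b<1$ the exponent $\frac{1-b}{2}$ is positive, so $\norm{\Phi_\epsilon(0)}$ is much smaller than $q_\epsilon^{-1}$ for small $\epsilon$. Lemma~\ref{implicite} then produces a unique small $\psi\in E^{a,b}_\epsilon$ solving (\ref{mongeampere}), and I would conclude exactly as in Theorem~\ref{thm:alh}: $\omega=\omega_\epsilon+dd^c\psi$ is positive (asymptotic to $\omega_\epsilon$, and nondegenerate by the equation), hence Kähler, with Ricci form $\ric_\epsilon-\frac12 dd^c f_\epsilon=0$ by the choice (\ref{potricci}); it is hyperkähler, hence a gravitational instanton, because $\hat{X}$ is simply connected, Taub-NUT being diffeomorphic to the contractible $\rl^4$, so that the resolution $\hat{X}$ of $\rl^4/D_k$ (a contractible cone resolved by a simply connected tree of rational curves) is simply connected. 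The cohomology class is $[\omega_{TN}]-\epsilon^2 PD[E]$ by construction of $\omega_\epsilon$.

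The main obstacle is the uniform linear estimate, that is, the analogue of Lemma~\ref{estimlin} in the ALF setting: one must establish the two Liouville-type vanishing statements---on the bubble $T^*\cx P^1$ carrying Kronheimer's $D_k$ ALE metric, and on the ALF limit Taub-NUT$/D_k$---while keeping the bound independent of $\epsilon$; the text signals that the first is unchanged thanks to (\ref{perturb}). The decay rate is a secondary matter: membership $\psi\in E^{a,b}_\epsilon$ only gives $r^{-a}$, but since $f_\epsilon$ is compactly supported the solution decays at the fastest rate permitted by the indicial roots of the ALF model, whence $\omega=\omega_{TN}+\OO(r^{-3+\delta})$ for every $\delta>0$, precisely as in Theorem~\ref{thm:d2}.
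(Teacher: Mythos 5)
Your proposal is correct and follows the paper's own route essentially verbatim: starting from the refined gluing with $h_4$ and the estimate $\abs{\nabla^k f_\epsilon}\leq c(k)\,\epsilon^{\frac{3-k}{2}}$, you adopt the ALF functional framework of theorem \ref{thm:d2} (drop $\rl\tilde{r}$, take $0<a<1$), adapt the uniform linear estimate of Lemma \ref{estimlin} via (\ref{perturb}) for the bubble and a Liouville argument on Taub-NUT$/D_k$ at infinity, and obtain the exponent $\frac{1-b}{2}$ in (\ref{comparison}), so that choosing $b\in(0,1)$ makes Lemma \ref{implicite} applicable --- which is exactly the paper's argument. One cosmetic slip: for $k>2$ the bubble is Kronheimer's minimal resolution of $\cx^2/D_k$, not $T^*\cx P^1$ (that is the $A_1$/Eguchi--Hanson case), though the vanishing argument you invoke is insensitive to this.
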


These ALF gravitational instantons are of dihedral type in the sense
of \cite{Mi2}. For $k=3$ (resp. $k=4$), they have the same asymptotics
as the Atiyah-Hitchin metric, that is the $D_0$ ALF gravitational
instanton (resp. its double cover, the $D_1$ ALF gravitational
instanton), with the difference that they have positive mass: their
metric is asymptotic to $g_{TN}$ with a positive parameter $m$, in
contrast with the Atiyah-Hitchin metric where the model at infinity is
Taub-NUT with a negative parameter $m$. As we shall see in the next
section, these are the only two cases where this can happen.  Also
note that the examples we build presumably coincide with the $D_k$ ALF
metrics of Cherkis-Dancer-Hitchin-Kapustin \cite{CH,CK1,Dan}.

\begin{rem*}
  The class of ALF gravitational instantons of cyclic type (whose
  boundary is fibered over $\sph^2$) is completely classified \cite{Mi2}:
  it is the class of multi-Taub-NUT metrics, with boundaries at
  infinity $\sph^3$ quotiented by the cyclic group $A_k$ ($k\geq 0$, the
  $k=0$ case is the Taub-Nut metric on $\setR^4$ described above). One
  should add one special case, the flat space $\setR^3\times \sph^1$ which can be
  numbered $A_{-1}$ (this fits well with several formulas in the next
  section).

  As mentioned to us by S. Cherkis, one can also construct $D_k$ ALF
  gravitational instantons starting from an $A_{2k-5}$ ALF
  gravitational instanton (a multi-Taub-NUT metric associated to a
  symmetric configuration of $2k-4$ points), and taking the quotient
  by an involution with two fixed points. The same technique applies
  and provides a hyperk\"ahler metric on the desingularization. The
  special case $k=2$ leads to the construction of a $D_2$ ALF
  gravitational instanton (conventionally the Hitchin metric) from a
  $A_{-1}$ one, that is from $\setR^3\times \sph^1$: this is the construction in
  section \ref{hitchin}.
\end{rem*}

\section{A formula for the Euler characteristic}
\label{sec:form-euler-char}

Let $X$ be an ALF gravitational instanton of dihedral type or cyclic
type. Near infinity, one has $X\simeq (A,+\infty)\times S$, where $S$ has a circle
fibration over $\Sigma=\setR P^2$ (dihedral case) or $\Sigma=\sph^2$ (cyclic
case). Moreover the metric $g$ has the following asymptotics:
$$ g \simeq dr^2 + r^2 \gamma + \theta^2 , $$
where $\theta$ is a connection 1-form on the circle bundle (or its double
covering in the dihedral case), and $\gamma$ is the horizontal metric lifted
from the standard metric on $\Sigma$. We have the following behavior for
the second fundamental form $\mathbb{I}$ and the curvature $R$:
\begin{equation}
 |\mathbb{I}| = \OO(\tfrac 1r), \quad |R|=\OO(\tfrac 1{r^3}) . \label{eq:4}
\end{equation}
There are well known formulas giving the Euler characteristic and
signature of $X$ in terms of the integrals of characteristic classes
on a large domain $D_\rho=\{r\leq \rho\} \subset X$ and boundary terms: for a gravitational
instanton, there remains only
\begin{align*}
  \chi &= \frac 1{8\pi^2} \int_{D_\rho} |W_-|^2
      + \frac 1{12\pi^2}\int_{\partial D_\rho} \mathfrak{T}(\mathbb{I}\land(\mathbb{I}\land\mathbb{I}+3R)) ,\\
  \tau &= \frac 1{12\pi^2} \int_{D_\rho} -|W_-|^2
      + \frac 1{12\pi^2} \int_{\partial D_\rho} \mathfrak{S}(\mathbb{I}(\cdot,R(\cdot,\cdot)n))
      + \eta(\partial D_\rho).
\end{align*}
Here $n$ is the normal vector, $\mathfrak{T}$ and $\mathfrak{S}$ are linear operations
which we do not need to write down explicitly, since from the control
(\ref{eq:4}) and the fact that the volume of $\partial D_\rho$ is $\OO(\rho^2)$, we
obtain that all boundary integrals go to zero when $\rho$ goes to
infinity. Finally this implies the following form of the
Hitchin-Thorpe inequality:
\begin{equation}
  \label{eq:5}
  2\chi+3\tau = \lim_{\rho\to\infty} \eta(\partial D_\rho) .
\end{equation}
For the gravitational instanton $X$, if $X\neq \setR^3\times \sph^1$ we have
$b_1(X)=b_3(X)=0$, and the intersection form is negative definite (see
\cite{HHM}, this follows immediately from the fact that the relevant
cohomology classes can be represented by $L^2$ harmonic forms), so it
follows that $\tau=-(\chi-1)$.  On the other hand, since the $\eta$-invariant
is conformally invariant, the limit in (\ref{eq:5}) is the adiabatic
limit:
$$ \eta_{\mathrm{ad}}(S) := \lim_{r\to\infty} \eta(\gamma+\frac 1{r^2}\theta^2) . $$
Therefore we obtain the following result:
\begin{thm}\label{thm:euler-eta}
  For an ALF gravitational instanton $X\neq \setR^3\times \sph^1$, with boundary $S$, one has
  \begin{equation}
 \chi(X) = 3\big( 1-\eta_{\mathrm{ad}}(S) \big) .\label{eq:6}
 \end{equation}
\end{thm}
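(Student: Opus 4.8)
The plan is to read off the theorem as the purely algebraic consequence of two facts already established in the paragraphs above the statement: the Hitchin--Thorpe-type identity coming from the Atiyah--Patodi--Singer formulas, and the topological relation $\tau=-(\chi-1)$. Concretely, I would form the linear combination $2\chi+3\tau$ of the two displayed curvature formulas. The coefficients are chosen precisely so that the bulk integrand cancels: $2\cdot\frac1{8\pi^2}|W_-|^2-3\cdot\frac1{12\pi^2}|W_-|^2=0$. After this cancellation the only interior contribution disappears, and the surviving terms are the two boundary curvature integrals together with $3\,\eta(\partial D_\rho)$, the factor $3$ arising simply because the signature formula (which carries the $\eta$ term with coefficient $1$) is multiplied by $3$.

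Next I would let $\rho\to\infty$ and discard the boundary curvature integrals. This is where the geometric input (\ref{eq:4}) enters: since $|\mathbb{I}|=\OO(\tfrac1r)$, $|R|=\OO(\tfrac1{r^3})$ and $\vol(\partial D_\rho)=\OO(\rho^2)$, each boundary integrand $\mathfrak{T}(\mathbb{I}\wedge(\mathbb{I}\wedge\mathbb{I}+3R))$ and $\mathfrak{S}(\mathbb{I}(\cdot,R(\cdot,\cdot)n))$ decays at least like $\rho^{-3}$, so against the $\OO(\rho^2)$ measure both integrals are $\OO(\rho^{-1})$ and vanish in the limit. The remaining term $3\,\eta(\partial D_\rho)$ converges, by conformal invariance of the $\eta$-invariant, to the adiabatic limit $3\,\eta_{\mathrm{ad}}(S)$. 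This yields
\[
2\chi+3\tau=3\,\eta_{\mathrm{ad}}(S).
\]

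Finally I would substitute the topological identity $\tau=-(\chi-1)=1-\chi$, valid for $X\neq\setR^3\times\sph^1$ because there $b_1=b_3=0$ and the intersection form is negative definite, so that $\tau=-b_2=-(\chi-1)$. The two relations form a linear system in $(\chi,\tau)$; eliminating $\tau$ gives $2\chi+3(1-\chi)=3\,\eta_{\mathrm{ad}}(S)$, i.e. $3-\chi=3\,\eta_{\mathrm{ad}}(S)$, whence $\chi=3\big(1-\eta_{\mathrm{ad}}(S)\big)$, which is exactly (\ref{eq:6}).

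Since all the analytic and topological inputs are granted in the text preceding the statement, there is really no obstacle left: the theorem is the arithmetic payoff of the combination $2\chi+3\tau$. Were these inputs not already in hand, the genuinely delicate points would be the vanishing of the boundary curvature integrals (requiring the sharp decay rates together with the $\OO(\rho^2)$ volume growth) and the negative-definiteness of the intersection form via $L^2$ harmonic forms; but with those available the proof reduces to the two displayed lines above.
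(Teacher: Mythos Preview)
Your proposal is correct and follows exactly the argument the paper gives in the paragraphs preceding the theorem: form $2\chi+3\tau$, kill the bulk $|W_-|^2$ term, let the boundary curvature integrals die by (\ref{eq:4}), identify the surviving $\eta$ term as the adiabatic limit, and plug in $\tau=-(\chi-1)$. In fact your bookkeeping is sharper than the paper's displayed equation (\ref{eq:5}), which omits the factor $3$ in front of $\eta$ (a typo, since without it one cannot recover (\ref{eq:6}) from $\tau=-(\chi-1)$); your version $2\chi+3\tau=3\,\eta_{\mathrm{ad}}(S)$ is the correct one.
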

The calculation of the adiabatic limit of the $\eta$-invariant is well
known, but we can also deduce it from the theorem: in both cyclic and
dihedral cases, we have examples obtained by desingularizing the
quotient of $\setC^2$ with the Taub-NUT metric by the cyclic group $A_k$
(this gives the multi-Taub-NUT metrics), or the dihedral group $D_k$
(the metrics coming from theorem \ref{thm:dk-alf}). This results in a
$k$-dimensional 2-cohomology and therefore $\chi=k+1$ and
\begin{equation}
\eta_{\mathrm{ad}}=\frac {2-k}3.\label{eq:9}
\end{equation}
In the dihedral case, the formula extends immediately to the $D_2$
case, which is Hitchin's metric on the desingularization of
$\setR^3\times \sph^1/\pm$. In this way the values of (\ref{eq:9}) for $k\geq 2$ give
the adiabatic $\eta$ invariant for all possible boundaries $S$ of an ALF
gravitational instanton. Nevertheless observe that the sign of the
$\eta$-invariant is changed if the orientation of $S$ is changed.

From the theorem \ref{thm:euler-eta}, since one must have $\chi\geq 1$, one
deduces the constraint
\begin{equation}
 \eta_{\mathrm{ad}}(S) \leq \frac 23 .\label{eq:10}
\end{equation}
From the values obtained in (\ref{eq:9}), we see that the only three
cases where the boundary $S$ of a dihedral ALF gravitational
instanton, endowed with the opposite orientation, can be filled by
another gravitational instanton, are $k=2$, $3$ or $4$. Indeed, for
$k=4$, the $D_0$ gravitational instanton (the Atiyah-Hitchin metric)
has the same boundary as the $D_4$ instanton, but with the opposite
orientation; observe that since it retracts on a $\setR P^2$ it has $\chi=1$
and $\eta_{\mathrm{ad}}=\frac23$, so the formulas (\ref{eq:6}) and
(\ref{eq:9}) remain true. For $k=3$, we have the same phenomenon with
the $D_1$ ALF gravitational instanton (the double cover of the $D_0$
one) which has the same boundary as the $D_3$ one up to
orientation. Finally for $k=2$, one has $\eta_{\mathrm{ad}}=0$ and the
opposite orientation is obtained by the same space, since the flat
space $\setR^3\times \sph^1/\pm$ admits an orientation reversing isometry.

We have proved:
\begin{cor}
  There is no dihedral ALF gravitational instantons with boundary
  equal to $\sph^3/D_k$ with negative orientation for $k>4$.
\end{cor}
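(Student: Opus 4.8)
The plan is to read off the corollary directly from Theorem~\ref{thm:euler-eta} combined with the topological constraint $\chi\ge 1$, arguing by contradiction. Suppose $X$ is a dihedral ALF gravitational instanton whose oriented boundary $S$ equals $\sph^3/D_k$ with the negative orientation, for some $k>4$. The first step is to verify the hypotheses of Theorem~\ref{thm:euler-eta}: since the boundary of $\rl^3\times\sph^1$ is $\sph^2\times\sph^1$ rather than $\sph^3/D_k$, we automatically have $X\neq\rl^3\times\sph^1$, so formula~(\ref{eq:6}) applies and gives $\chi(X)=3\bigl(1-\eta_{\mathrm{ad}}(S)\bigr)$.

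Next I would pin down the value of the adiabatic $\eta$-invariant of $S$. By~(\ref{eq:9}), the sphere quotient $\sph^3/D_k$ carrying its \emph{positive} orientation---the one realized by the $D_k$ instanton of Theorem~\ref{thm:dk-alf}---has $\eta_{\mathrm{ad}}=\frac{2-k}{3}$. Because the $\eta$-invariant reverses sign under a reversal of orientation, the negatively oriented boundary $S$ satisfies $\eta_{\mathrm{ad}}(S)=\frac{k-2}{3}$.

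Finally I would invoke the constraint~(\ref{eq:10}), $\eta_{\mathrm{ad}}(S)\le\frac23$, which rests on $\chi(X)\ge 1$. The latter holds because, for any gravitational instanton with $X\neq\rl^3\times\sph^1$, one has $b_1(X)=b_3(X)=0$ and hence $\chi(X)=1+b_2(X)\ge 1$. Substituting the value above, $\frac{k-2}{3}\le\frac23$ forces $k\le 4$, contradicting $k>4$; therefore no such instanton exists.

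The computation is mere bookkeeping of signs once the main theorem is available, so the real content has already been placed in Theorem~\ref{thm:euler-eta} and formula~(\ref{eq:9}). The one point I would check carefully is the orientation convention, i.e.\ that it is the positive orientation of $\sph^3/D_k$ (the hyperk\"ahler one coming from Theorem~\ref{thm:dk-alf}) that yields $\eta_{\mathrm{ad}}=\frac{2-k}{3}$, and that reversal genuinely flips the sign; this is exactly the ingredient that makes $\eta_{\mathrm{ad}}(S)$ exceed $\frac23$ precisely in the regime $k>4$.
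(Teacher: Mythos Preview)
Your proof is correct and follows essentially the same route as the paper: combine $\chi(X)\ge 1$ with Theorem~\ref{thm:euler-eta} to get the bound~(\ref{eq:10}), then use the value~(\ref{eq:9}) together with the sign change of $\eta_{\mathrm{ad}}$ under orientation reversal to rule out $k>4$. The only added detail is your explicit check that $X\neq\rl^3\times\sph^1$, which the paper leaves implicit.
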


Let us observe from the ansatz (\ref{eq:8}) (\ref{eq:7}) for the
Taub-NUT metric that the orientation of the boundary $S$ depends on
the sign of the mass $m$. Specifying the sign of the mass is therefore
the same as specifying the orientation of the boundary $S$. In the
cyclic case, all ALF gravitational instantons but $\setR^3\times \sph^1$ have
positive mass \cite{Mi1,Mi2}. In the dihedral case, the corollary
implies that all ALF gravitational instantons have positive mass, with
the only exceptions of $D_0$ or $D_1$ asymptotics (negative mass), or
$D_2$ asymptotics (zero mass).

Finally, remind that, if in the cyclic case the ALF gravitational
instantons are completely classified \cite{Mi2}, the classification is
still an open problem in the dihedral case: at least the corollary
tells us that there is no possible new class with negative mass in the
$D_k$ case for $k>4$.

\section{Other ALH Ricci-flat K\"ahler examples}
\label{sec:other-alh-ricci}

There are six oriented compact flat 3-manifolds \cite{Wol}: the torus
$\bT^3$, four quotients $F_j=\bT^3/\setZ_j$ for $j=2,3,4,6$ and the quotient
$F_{2,2}=\bT^3/\setZ_2\times\setZ_2$.  In \S~\ref{sec:new-alh-ricci}, we constructed by
quotient a K\"ahler Ricci-flat metric with one ALH end asymptotic to $\setR\times
F_2$. In this section we will exhibit similar examples with one end
asymptotic to $\setR\times F_j$ for $j=3,4,6$. This amounts to construct
suitable rational elliptic surfaces with finite group action.

Choose $\zeta_j=\exp(2\pi i/j)$ and a flat 2-torus $\bT^2$ with an action of
$\setZ_j$. Then the flat manifold $F_j$ is obtained as the quotient of
$\bT^3=\sph^1\times\bT^2$ by the diagonal action of $\setZ_j$ obtained by
multiplication by $\zeta_j$ on both factors. The flat metric
\begin{equation}
dt^2+dx^2+dy^2+dz^2\label{eq:2}
\end{equation}
on $\setR\times\bT^3$ descends to a flat K\"ahler metric on $\setR\times F_j$, but the
holomorphic-symplectic form $\Omega=(dt+idx)\land (dy+idz)$, which has a simple
pole at infinity in the compactification $\bP^1\times\bT^2$, becomes
$j$-multivalued in the quotient: the metric is not hyperk\"ahler since
the monodromy at infinity is not a subgroup of $SU(2)$.  (For the last
flat 3-manifold the monodromy is not a subgroup of $U(2)$ so one can
not hope to construct K\"ahler examples, but one can still hope to
construct actions leading to ALH Ricci flat examples.)

We start from a rational elliptic surface $X_j$ with:
\begin{itemize}
\item if $j=3$, three singular fibres of type $IV$;
\item if $j=4$, four singular fibres of type $III$;
\item if $j=6$ six singular fibres of type $II$.
\end{itemize}
A glance at the table in \cite[p. 206]{Mir90} shows that such
surfaces exist. One can construct them in a concrete way using the
Weierstrass model: if $L=\OO_{\bP^1}(1)$, and $g_2$ and $g_3$ are
holomorphic sections of $L^4$ and $L^6$, then the surface
\begin{equation}
  \label{eq:1}
  y^2z = 4x^3 - g_2 xz^2 - g_3 z^3 \quad \text{ in } \bP(L^2\oplus L^3\oplus\OO_{\bP^1})
\end{equation}
is a rational elliptic surface. In case $g_3=0$ and $g_2$ has four
simple zeros, one gets $X_4$; if $g_2=0$ and $g_3$ has six simple
zeros, one gets $X_6$; if $g_2=0$ and $g_3$ has three double zeros
one gets $X_3$. Moreover we can choose $g_2$ and $g_3$ so that
$X_j$ has an action of $\setZ_j$ over $\bP^1$ which permutes the singular
fibres. For example we take the standard action of $\setZ_j$ on $\bP^1$ by
$z\mapsto\zeta_jz$  and we use $g_2(u)=u^4-1$ for
$j=4$, $g_3(u)=u^6-1$ for $j=6$ and $g_3(u)=(u^3-1)^2$ for $j=3$.

Given any fibre, there is a holomorphic symplectic form on $X_j$ with
a simple pole along this fibre, giving a section of $K(F)$. We choose
$\Omega\in H^0(X_j,K(F))$ the symplectic form with a simple pole over the
fibre at infinity, so that near infinity one has $\Omega\sim \frac{dz}z\land dv$,
where $v$ is a coordinate on the fibre at infinity.

The action of $\setZ_j$ on $\bP^1$ has fixed points $0$ and $\infty$. The
action can be chosen so that it is free on the fibre over the origin
(translation), but has fixed points on the fibre at infinity, giving
Kleinian singularities of type $\setC^2/\setZ_j$ on the quotient $X_j/\setZ_j$.
The minimal desingularization $\hat X_j$ is again an elliptic surface
over $\bP^1$, with a multiple fibre of order $j$ over the origin, a
singular fibre of type $IV^*$ ($j=3$), $III^*$ ($j=4$) or $II^*$
($j=6$) over the point at infinity, and similarly a singular fibre of
type $IV$, $III$ or $II$ over $u=1$. Moreover, the section $\Omega^j$
descends as a section $\hat \Omega\in H^0(\hat X_j,K^j(F))$ which does not
vanish on $\hat X_j$ and has a simple pole over $\infty$ (in other words,
$\hat \Omega^{\frac 1j}$ is a multivalued holomorphic symplectic form
outside the fibre over $\infty$).

Given a K\"ahler form $\omega$ which is asymptotic to (\ref{eq:2}), an ALH
K\"ahler Ricci flat metric on $\hat{X_j}$ is given by a solution of the
Monge-Amp\`ere equation
\begin{equation}
  \label{eq:3}
  (\omega+i\partial\bar \partial f)^2 = \Omega^{\frac 1j}\land \overline{\Omega^{\frac 1j}} ,
\end{equation}
where $f$ has exponential decay on the end $\setR\times F_j$.
One can either solve directly on $\hat{X_j}$ or find a $\setZ_j$-invariant
solution on $X_j$: this amounts to solving the Monge-Amp\`ere equation
for cylindrical ends, and we refer to \cite{TY1,Joy,Kov}. More
specifically the case of $X_j$ is done in \cite{Hei}.

Using the same construction, one can recover the ALH example $\hat
X_2$ of \S~\ref{sec:new-alh-ricci} for $j=2$, starting from a rational
elliptic surface with two singular fibres of type $I_0^*$ with an
action of $\setZ_2$. In that case, our desingularization procedure of the
flat metric $\setR\times\bT^3/\setZ_2\times\setZ_2$ gives a good approximation of certain
solutions of (\ref{eq:3}). This flat model is no more available for
$j=3,4,6$.

\begin{rem*}
  It might seem disappointing that these non-hyperk\"ahler examples
  occur as finite quotients of hyperk\"ahler manifolds. It turns out to
  be a general fact: any Ricci-flat K\"ahler four-manifold with ALE,
  ALF, ALG or ALH asymptotics is bound to have a hyperk\"ahler finite
  cover. To see why, observe that such a manifold $M$ has a flat
  canonical bundle (since $\ric=0$), determined by a representation
  $\rho$ of $\pi_1(M)$ in $\cx$. Building a hyperk\"ahler finite cover
  amounts to finding a subgroup $G$ of $\pi_1(M)$ of finite index and on
  which $\rho$ is trivial. Now, since $\ric=0$, the Weitzenb\"ock formula
  ensures the $L^2$ cohomology vanishes in degree $1$. In terms of
  standard De Rham cohomology, this implies \cite{And} that the image
  of the natural map $H^1_c (M) \to H^1(M)$ is trivial. Since the
  complement of a compact set in $M$ is diffeomorphic to $\rl_+ \times S$
  for some compact $3$-manifold $S$, this means $H^1(M)$ injects into
  the cohomology space $H^1(S)$ of the `boundary at infinity' $S$ or
  in other words $H_1(S,\rl)$ surjects onto $H_1(M,\rl)$. In all
  ALE,F,G,H asymptotics, $H_1(S,\rl)$ is generated by a finite number
  of loops $\gamma_i$ for which some iterate $\gamma_i^{k_i}$ acts trivially on
  the canonical bundle of $M$.  So the subgroup $G$ of $\pi_1(M)$
  generated by the derived subgroup $[\pi_1(M),\pi_1(M)]$ and the
  $\gamma_i^{k_i}$'s has the required properties.
\end{rem*}

\appendix

\section{Analysis in weighted spaces}
\label{sec:analys-weight-spac}

Our construction relies on a few facts about the behaviour of the Laplacian on functions in complete non-compact Riemannian manifolds
$(M,g)$ with prescribed asymptotics. Let us sum up the theory. 

Basically, we assume here the existence of a compact domain $K$ in $M$ such that $M \backslash K$ has finitely many connected components which, up to a finite covering, are diffeomorphic to the complement of the unit ball in  $\rl^{m} \times \tor^{4-m}$, for $m=1, 2, 3, 4$. We will further assume that the metric $g$ coincides with the standard flat metric $g_0 = g_{\rl^{m}} + g_{\tor^{4-m}}$ at infinity in each end. The notation $g_{\tor^{4-m}}$ is for the flat metric obtained as a quotient of $\rl^{4-m}$ by any lattice. The case $m=3$ will include slightly more sophisticated situations, like in \cite{Mi1}. Basically, the Hopf fibration $\pi: \sph^3 \to \sph^2$ can be extended radially into $\pi: \rl^4 \backslash \set{0} \to \rl^3 \backslash \set{0}$ and we may assume that $M \backslash K$ is the total space of (a restriction of) this circle fibration. Then we define the model metric 
at infinity to be $g_0 := \pi^*g_{\rl^{3}} + \eta^2$, where $\eta$ is any constant multiple of the standard contact form on the three-sphere (\cite{Mi1}). 
Note also that all we will say will remain true if $g$ is only asymptotic to $g_0$, thanks to perturbation arguments (cf. \cite{Mi1} for instance). The analysis on such spaces is somehow understood, so we will drop the proofs. The reader interested in the details of this analytical material is referred to \cite{Mel,HHM} for the Mazzeo-Melrose approach or to \cite{MP,Mi1} for softer arguments. 

We will denote by $r$ the Euclidean distance to the origin in $\rl^{m}$. In what follows, we will always write $A_R$ for the ``annulus'' defined by $R\leq r \leq 2R$ and $A^\kappa_R$ for $2^{-\kappa} R \leq r \leq 2^{\kappa +1} R$ ($\kappa \geq 0$). Similarly, the ``balls'' $K \cup \set{r \leq R}$ will be denoted by $B_R$. 

\subsection{The Sobolev theory}

Given a real number $\delta$ and a subset $\Omega$ of $M$, we first define the weighted Lebesgue space
$L^2_\delta (\Omega)$ as the set of functions $u \in L^2_{loc}(\Omega)$ such that the following norm is finite: 
$$
\norm{u}_{L^2_\delta(\Omega)} := \left( \int_{\Omega \cap K} u^2  
+ \int_{\Omega \backslash K} u^2 r^{-2\delta}  \right)^{\frac{1}{2}}.
$$
We will often write $L^2_\delta$ for 
$L^2_\delta (M)$. The following should be kept in mind: 
$$
r^{a} \in L^2_{\delta}(M \backslash K) \Leftrightarrow \delta > \frac{m}{2} + a.
$$

Any function $u$ on $M \backslash K$ can be written
$
u = \Pi_0 u + \Pi_\bot u
$
where $\Pi_0 u$ is obtained by computing the mean value of $u$ along $\tor^{4-m}$. 
In other words, $\Pi_0 u$ is the part in the kernel of the Laplacian on $\tor^{4-m}$ while $\Pi_\bot u$ lies in the positive eigenspaces of this operator. The point is these projector commute with the Laplacian and elliptic estimates will be different for $\Pi_0 u$ and $\Pi_\bot u$. We therefore introduce the Hilbert space $L^2_{\delta,\epsilon} (\Omega)$ of functions $u\in L^2_{loc}(\Omega)$ such that $\norm{\Pi_0 u}_{L^2_\delta(\Omega \backslash K)}$ and $\norm{\Pi_\bot u}_{L^2_\epsilon(\Omega \backslash K)}$ are finite. The good Sobolev space for us is the Hilbert space 
$H^2_{\delta}$ of functions $u \in H^2_{loc}$ such that $\nabla^k \Pi_0 u \in L^2_{\delta-k}$ and $\nabla^k \Pi_\bot u \in L^2_{\delta-2}$ for $k=0,1,2$. 

To state the main a priori estimate, we need a definition. We will say that the exponent $\delta$ is \emph{critical}
if $r^{\delta - \frac{m}{2}}$ is the (pointwise) order of growth of an harmonic function on $\rl^m\backslash \set{0}$. 
More precisely, the critical values correspond to $\delta - 2 \in \ir \backslash \set{-1}$ when $m=4$, $\delta - \frac{3}{2} \in \ir$ when $m=3$, $\delta - 1 \in \ir$ when $m=2$, $\delta -\frac12 = 0$ or $1$ when $m=1$. When $m=2$, the value $\delta=1$ is doubly critical, owing to the constants and  the harmonic function $\log r$. When $m=1$, there are only two critical values because the Laplacian on $\rl$ is also (minus) the Hessian, so that harmonic functions are affine ; in this case, exponential weights are usually used, but we will not really need them and we prefer to give a general framework including faster than linear volume growths. Note also that when one of the ends of $M$ is a non-trivial finite quotient of the model, some critical values (as defined above) may turn irrelevant: for instance, there is no harmonic function with exactly linear growth on $\rl^2/\pm$, which makes $\delta=0$ and $\delta=2$ non-critical.

We are interested in the unbounded operator 
\operator{P_\delta}{\mathcal{D}(P_\delta)}{L^2_{\delta-2,\delta-2}}{u}{\Delta u} 
whose domain $\mathcal{D}(P_\delta)$ is the dense subset of $L^2_{\delta,\delta-2}$ 
whose elements $u$ have their Laplacian in $L^2_{\delta-2,\delta-2}$. The usual $L^2$ pairing 
identifies the topological dual space of $L^2_{\delta,\delta-2}$
(resp. $L^2_{\delta-2,\delta-2}$) with $L^2_{-\delta,2-\delta}$ (resp. $L^2_{2-\delta,2-\delta}$). 
For this identification, the adjoint $P^*_\delta$ of $P_\delta$ is 
\operator{P^*_\delta}{\mathcal{D}(P^*_\delta)}{L^2_{-\delta,2-\delta}}{u}{\Delta u} 
where the domain $\mathcal{D}(P^*_\delta)$ is the dense subset of $L^2_{2-\delta}$ 
whose elements $u$ have their Laplacian in $L^2_{-\delta,2-\delta}$. 
The following proposition can be proved for instance along the lines of Proposition 1 in \cite{Mi1}.

\begin{prop}
If $\delta$ is non-critical, then $P_\delta$ is Fredholm and its cokernel is the kernel of $P_\delta^*$.
\end{prop}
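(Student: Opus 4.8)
The plan is to prove Fredholmness by establishing a coercive \emph{a priori} estimate with a compact remainder, and then to identify the cokernel through the closed range theorem. First I would reduce everything to the model at infinity by means of the splitting $u = \Pi_0 u + \Pi_\bot u$ along the torus fibers. Since this splitting commutes with $\Delta$, the two pieces can be treated independently. On $\Pi_\bot u$ the torus Laplacian contributes a strictly positive eigenvalue, so the operator behaves like a massive (Helmholtz-type) operator $\Delta_{\rl^m} + \mu$ with $\mu > 0$; such an operator is uniformly invertible on each end for \emph{every} polynomial weight, which is precisely why the fixed weight $\delta-2$ occurs on the $\Pi_\bot$ factor. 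On $\Pi_0 u$ the operator reduces to the flat Laplacian on $\rl^m$ (minus a ball), whose homogeneous solutions grow like $r^{\gamma}$ with $\gamma$ an indicial root. The hypothesis that $\delta$ be non-critical says exactly that $r^{\delta - \frac{m}{2}}$ is not such a growth rate, so the model operator has no solution at this homogeneity.

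The heart of the matter is the a priori estimate. I would work on the dyadic annuli $A_R$, rescale each one to unit size, and apply interior elliptic estimates for the (asymptotically) model operator. Non-criticality guarantees a uniform bound on each annulus with no loss of weight, and summing over the annuli with the weights built into the norms yields, for $R$ large,
$$
\norm{u}_{H^2_\delta} \leq c \left( \norm{\Delta u}_{L^2_{\delta-2,\delta-2}} + \norm{u}_{L^2(B_R)} \right).
$$
This is exactly the kind of estimate proved along the lines of Proposition~1 in \cite{Mi1} (see also \cite{MP,Mel,HHM}). Because the inclusion $H^2_\delta(B_R) \hookrightarrow L^2(B_R)$ is compact by Rellich on the compact core, the standard functional-analytic lemma (any operator satisfying $\norm{u} \leq c(\norm{Pu} + \norm{Ku})$ with $K$ compact is semi-Fredholm) shows that $P_\delta$ has finite-dimensional kernel and closed range.

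It remains to handle the adjoint. The $L^2$ pairing identifies $P^*_\delta$ with the Laplacian on the dual weighted spaces, now carrying weight $2-\delta$. Since the indicial roots of the model on $\rl^m$ come in conjugate pairs $\gamma \leftrightarrow -(m-2)-\gamma$, the set of critical exponents is invariant under $\delta \mapsto 2-\delta$; hence $2-\delta$ is again non-critical, and $P^*_\delta$ satisfies the very same a priori estimate, so it too has finite-dimensional kernel and closed range. Consequently $P_\delta$ is Fredholm, and the closed range theorem gives $\operatorname{coker}(P_\delta) = \ker(P^*_\delta)$, which is the assertion.

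The main obstacle is the a priori estimate: everything hinges on the model analysis on $\rl^m$, that is, on knowing the indicial roots and on the fact that non-criticality removes the kernel at the prescribed homogeneity. By contrast, the $\Pi_\bot$ part is comparatively soft, thanks to the mass gap coming from the positive eigenvalues of the torus Laplacian, and the passage from the a priori estimate to Fredholmness and to the identification of the cokernel is routine functional analysis.
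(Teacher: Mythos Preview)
The paper does not actually prove this proposition: it merely states it and refers the reader to ``along the lines of Proposition~1 in \cite{Mi1}'' (with \cite{MP,Mel,HHM} cited earlier for the general theory). Your outline is exactly the standard argument one finds in those references---split along the fibres via $\Pi_0$ and $\Pi_\bot$, exploit the spectral gap on the $\Pi_\bot$ piece and the indicial analysis on the $\Pi_0$ piece to get an a~priori estimate with a compact remainder, then invoke the semi-Fredholm lemma and duality. In particular your observation that the critical set is invariant under $\delta\mapsto 2-\delta$ is correct in each of the four cases $m=1,2,3,4$ listed in the paper, so the adjoint is handled symmetrically. There is nothing to correct; your proposal simply fills in what the paper deliberately left to the literature.
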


The following property is classical in this context and makes it possible to understand precisely the growth 
of solutions to our equations (cf. Lemma 5 in \cite{Mi1}).

\begin{prop}\label{sauts}
Suppose $\Delta u = f$ with $u$ in $L^2_{\delta}(B_{R_0}^c)$ and $f$ in $L^2_{\delta'-2}(B_{R_0}^c)$ for non-critical exponents $\delta > \delta'$ and a large number $R_0$. Then in each end of $M$, we may write $u= h + v$, 
where $h$ is a harmonic function on $\rl^m \backslash \set{0}$ and $v$ is in $L^2_{\delta',\delta'-2}$.
\end{prop}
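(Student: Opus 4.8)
The plan is to reduce everything to the flat product model at infinity and then separate variables. Since the metric coincides with (or is asymptotic to) $g_0$ on each end and the statement concerns only the behaviour on $B_{R_0}^c$, I would fix a single end, identified with the complement of a ball in $\rl^m \times \tor^{4-m}$ (the circle-fibred case $m=3$ being handled by the same spectral decomposition, with $\tor^{4-m}$ replaced by the fibre). Write $u = \Pi_0 u + \Pi_\bot u$ and $f = \Pi_0 f + \Pi_\bot f$; since these projectors commute with $\Delta$, the two pieces can be analysed independently, and the claimed harmonic function $h$ will come entirely from the invariant part $\Pi_0 u$.

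First I would dispose of the transverse part $\Pi_\bot u$. Fourier-expanding along $\tor^{4-m}$, each nonzero frequency $\xi$ yields an equation $(\Delta_{\rl^m} - \abs{\xi}^2) u_\xi = f_\xi$ on $\rl^m$. Because the torus Laplacian has a spectral gap, $\abs{\xi}^2 \geq \mu_0 > 0$ for every contributing mode, so these operators are uniformly invertible and their Green kernels decay exponentially in $r$. Consequently $\Pi_\bot u$ inherits the decay of $\Pi_\bot f$ up to exponentially small corrections and already lies in $L^2_{\delta'-2}$. The point worth stressing is that the spectral gap makes the transverse indicial operator invertible for \emph{every} real weight, so $\Pi_\bot$ never meets a critical exponent and produces no harmonic contribution; this whole piece is absorbed into $v$.

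The heart of the matter is the invariant part $\Pi_0 u$, which descends to a function $\bar u$ on $\rl^m$ solving $\Delta_{\rl^m}\bar u = \bar f$ with $\bar u \in L^2_\delta$ and $\bar f \in L^2_{\delta'-2}$. Here I would expand in spherical harmonics, $\bar u = \sum_\ell \bar u_\ell(r) Y_\ell$, reducing to a family of radial Euler-type ODEs whose indicial roots are exactly the growth rates $r^\gamma$ of the homogeneous harmonic functions on $\rl^m \setminus \set{0}$ (namely $\gamma = \ell$ and $\gamma = -(m-2+\ell)$ for $m \geq 3$, with the logarithmic, resp. affine, modifications recorded in the text for $m=2$ and $m=1$). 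Since $\delta$ and $\delta'$ are non-critical, only finitely many of these roots fall in the window between the decay rates associated with $\delta'$ and $\delta$; I would collect the corresponding homogeneous solutions into the harmonic function $h$. For every other mode the radial operator is invertible on the relevant weighted space, and variation of parameters produces a solution decaying at the faster rate. Subtracting $h$ then leaves a remainder $\bar u - h \in L^2_{\delta'}$, and setting $v := (\bar u - h) + \Pi_\bot u$ yields the asserted decomposition $u = h + v$ with $v \in L^2_{\delta',\delta'-2}$.

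I expect the main obstacle to be precisely this radial analysis for $\Pi_0 u$: showing that, between two non-critical weights, the only obstruction to improving the decay from rate $\delta$ to rate $\delta'$ is the finite-dimensional space of homogeneous harmonic functions attached to the indicial roots caught in between. This needs the invertibility of each radial operator at non-critical weights together with control of the sum over $\ell$, which is in fact harmless: increasing $\ell$ pushes the indicial roots out of the window, so only finitely many spherical-harmonic degrees contribute to $h$ and the high-$\ell$ tail converges. Alternatively, one can phrase this step abstractly through the Fredholm property of $P_\delta$ and $P_{\delta'}$ from the preceding proposition, the jump in kernel and cokernel as the weight crosses a critical value being measured by exactly these homogeneous harmonic functions; this is the content of the cited Lemma~5 in \cite{Mi1}.
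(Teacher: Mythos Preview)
The paper does not actually prove this proposition: it is stated as a classical fact ``cf.\ Lemma~5 in \cite{Mi1}'' and the authors explicitly drop the proofs of the analytical material in the appendix. Your sketch---separating $\Pi_0 u$ from $\Pi_\bot u$, handling the latter via the spectral gap on the fibre, and reducing the former to a radial ODE/indicial-root analysis on $\rl^m\setminus\{0\}$---is precisely the standard argument behind such ``jump'' propositions and is what one finds in the cited reference, so there is nothing substantive to compare.
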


For instance, if $m=1$ and $f$ is a smooth and compactly supported function, we obtain that, 
in each end of $M$, $v$ lies in $L^2_\delta$ for every $\delta$. Since $\Delta v = f$, we can use standard elliptic estimates 
such as Lemma \ref{nashmoser} (below) to see that $v = \OO(r^{-a})$ for every $a$ (together with its derivatives, indeed). 
We will abbreviate this by $v = \OO(r^{-\infty})$. So a solution $u$ of $\Delta u = f$ behaves in each end like an affine function 
on $\rl$, up to $\OO(r^{-\infty})$.

This proposition also implies that $P_\delta$ is injective as soon as $\delta- \frac{m}{2}< 0$
and, by duality, surjective as soon as $\delta - \frac{m}{2} > 2 - m$ (cf Corollary 2 in \cite{Mi1}).

Finally, as a by-product of the theory (cf. Lemma 4 in \cite{Mi1}), we are given, for every (large) number $R_0$,
and every non-critical $\delta < \frac{m}{2}$, a bounded operator 
\begin{equation}\label{inv}
\appli{G_{R_0}}{L^2_{\delta-2}(B_{R_0}^c)}{H^2_{0,\delta}(B_{R_0}^c)} 
\end{equation} 
which is an inverse for the Laplacian. Its domain $H^2_{0,\delta}$ is the space of functions
$u \in H^2_{\delta}$ such that $\Pi_\bot u$ vanishes along $\partial B_{R_0}$. On $\ker \Pi_0$,
$G_{R_0}$ is defined by first solving the equation on the domains $B_R \backslash B_{R_0}$ with Dirichlet 
boundary condition and then letting $R$ go to infinity. On $\ker \Pi_\bot$, it is given by an explicit 
formula. For instance, when $m=1$, we set for each $f \in \ker \Pi_\bot$: 
$$
G_{R_0} f := \int_{R_0}^r (\rho-r) f(\rho) d\rho.
$$

\subsection{From integral to pointwise bounds}

In view of handling (weighted) H\"older norms, more adapted to nonlinear analysis, 
the following Moser inequality is useful:
\begin{equation}\label{nashmoser}
\norm{u}_{L^\infty(A_R)} \leq c \left( \frac{1}{\sqrt{R^m}} \norm{u}_{L^2(A^1_R)} 
+ R^2 \norm{\Delta u}_{L^\infty(A^1_R)} \right) 
\end{equation}
A way to obtain this consists in lifting the problem to a square-like domain of size $R$ in $\rl^4$ and 
applying the standard elliptic estimate on $\rl^4$ ; the behaviour of the constants with respect to $R$ follows 
from scaling and counting fundamental domains. As a consequence of this inequality, the inverse $G_{R_0}$ for the Laplacian 
on exterior domains (cf. \ref{inv}) obeys an $L^\infty$ estimate. The proof relies on an idea that can be found in \cite{MP,Biq}.

\begin{lem}\label{estimG}
Given positive numbers $R_0$ and $a$, there is a constant $c = c(R_0,a)$ such that for every continuous function $f$ 
on $B_{R_0^c}$ with $f = \OO(r^{-a})$,
$$
\norm{r^{a+2} G_{R_0} f}_{L^\infty} \leq c \norm{r^a f}_{L^\infty} . 
$$
\end{lem}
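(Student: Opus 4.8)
The plan is to upgrade the integral ($L^2$-based) theory recalled in the appendix to a pointwise weighted bound, working one annulus at a time and then summing over dyadic scales, in the spirit of \cite{MP,Biq}. Throughout I write $u := G_{R_0} f$, so that $\Delta u = f$ on $B_{R_0}^c$ and, by the construction of $G_{R_0}$ in (\ref{inv}), $u$ lies in $H^2_{0,\delta}(B_{R_0}^c)$ for a suitable non-critical weight $\delta < \frac m2$. I set $M := \norm{r^a f}_{L^\infty}$, so that $\abs{f} \leq M\, r^{-a}$ on $B_{R_0}^c$; the whole point is to control $u$ by $M$ with a constant depending only on $R_0$ and $a$.

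First I would localize on a dyadic shell $A_R = \set{R \leq r \leq 2R}$, with $R = 2^k R_0$ and $k \geq 0$, and apply the scaled Moser inequality (\ref{nashmoser}):
$$
\norm{u}_{L^\infty(A_R)} \leq c\Big( R^{-m/2}\norm{u}_{L^2(A^1_R)} + R^2\norm{\Delta u}_{L^\infty(A^1_R)}\Big).
$$
The inhomogeneous term is controlled at once from the hypothesis, since $\Delta u = f$ and $r \sim R$ on $A^1_R$, giving a bound by $M$ times an explicit power of $R$.

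The second step supplies the missing $L^2$ control. Here I would invoke the mapping property of the inverse from (\ref{inv}): for a non-critical weight $\delta$ the operator $G_{R_0}$ sends $L^2_{\delta-2}(B_{R_0}^c)$ boundedly into $H^2_{0,\delta}(B_{R_0}^c)$. Since $\abs{f} \leq M\, r^{-a}$, one has $\norm{f}_{L^2_{\delta-2}} \leq cM$ for the admissible choices of $\delta$ (those for which $r^{-a} \in L^2_{\delta-2}$, cf. the criterion $r^c \in L^2_\delta \Leftrightarrow \delta > \frac m2 + c$), whence $\norm{u}_{L^2_\delta} \leq cM$. Restricting this weighted bound to $A^1_R$ and again using $r \sim R$ turns it into a bound on $\norm{u}_{L^2(A^1_R)}$ carrying an explicit power of $R$, so that the first Moser term is likewise $M$ times a power of $R$.

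Feeding the second step into the first yields, on every shell, a bound for $\norm{u}_{L^\infty(A_R)}$ by $M$ times a sum of two explicit powers of $R$. The crux — and the step I expect to be the main obstacle — is the bookkeeping of these exponents: one must select $\delta$ within its admissible range (non-critical, $\delta < \frac m2$, and compatible with $r^{-a} \in L^2_{\delta-2}$) so that, after multiplying by the weight appearing in the statement, both contributions are controlled uniformly in $R$ by $M$. Because $r \sim R$ on each shell, the pointwise weight is constant on $A_R$ up to a fixed factor, so the passage from the per-shell estimates to the global weighted $L^\infty$ bound is simply a supremum over $k$, the constant depending only on $R_0$ and $a$ through the Moser constant and the operator norm in (\ref{inv}). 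Finally, the innermost shell near $\partial B_{R_0}$ is handled using the boundary condition built into $H^2_{0,\delta}$ together with standard boundary elliptic estimates, which contribute only to $c(R_0,a)$.
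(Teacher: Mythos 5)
Your overall frame (scaled Moser inequality on dyadic shells, fed by the weighted $L^2$ theory, in the spirit of \cite{MP,Biq}) is the right one, but the way you supply the $L^2$ input has a genuine gap --- and it is exactly the step you flag as ``the main obstacle''. You apply the mapping property (\ref{inv}) to $f$ itself, with one fixed global weight $\delta$. This cannot be made to work. The operator $G_{R_0}$ is only available for $\delta<\frac m2$, while the finiteness requirement $r^{-a}\in L^2_{\delta-2}$ forces $\delta>\frac m2+2-a$; these are compatible only when $a>2$, so for the weights the paper actually needs (e.g.\ $0<a<1$ in the ALF case) there is no admissible $\delta$ at all and $\norm{f}_{L^2_{\delta-2}}=\infty$. (A side remark that matters here: the lemma as printed has the two weights interchanged; what the paper's proof establishes, and what is used elsewhere, is $\norm{r^a G_{R_0}f}_{L^\infty}\leq c\norm{r^{a+2}f}_{L^\infty}$, i.e.\ the solution decays two orders \emph{less} than the data. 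As printed the claim is false: for $f$ concentrated on a single nonzero Fourier mode of the $\tor^{4-m}$ factor, the decaying solution of $\Delta u=f$ is comparable in size to $f$, not two orders smaller.) Even after correcting the normalization to $\abs{f}\leq M r^{-a-2}$, your single-weight scheme still fails at the endpoint: finiteness of $\norm{f}_{L^2_{\delta-2}}$ forces $\delta>\delta_a:=\frac m2-a$ strictly, and then the first Moser term contributes $R^{-m/2}\norm{u}_{L^2(A^1_R)}\leq c\,M R^{\delta-\frac m2}$, which is strictly worse than the needed $R^{-a}$ precisely because $\delta>\delta_a$. With one weight you can prove $G_{R_0}f=\OO(r^{-a+\epsilon})$ for every $\epsilon>0$, but never the sharp bound.

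The paper's proof gets around this with two ingredients absent from your proposal. First it splits $f=\Pi_0 f+\Pi_\bot f$ and handles the fibre-average part by the explicit integral formula defining $G_{R_0}$ on $\ker\Pi_\bot$. Second, and crucially, for the $\Pi_\bot$ part it decomposes the \emph{source} rather than just the observation region: $f=\sum_j f_j$ with $\supp f_j\subset A_{R_j}$, so that each $f_j$ lies in every weighted space, and it estimates $G_{R_0}f_j$ on the shell $A_{R_i}$ using a weight depending on the pair $(i,j)$, namely $\delta=\delta_a+\epsilon\,\sign(j-i)$. This produces the off-diagonal factor $\left( R_i/R_j \right)^{\delta-\delta_a}=2^{-\epsilon\abs{i-j}}$, and summing the geometric series over $j$ recovers the sharp exponent uniformly in $i$. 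This variable-weight trick is the actual content of the Maz'ya--Plamenevski/Biquard idea you cite; without it (or some substitute, such as direct kernel estimates), the exponent bookkeeping you postpone cannot be closed.
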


\begin{proof}
First, write $f=\Pi_0 f + \Pi_\bot f$ and observe that $\Pi_0 f$ is obtained as an integral along the $\tor^{4-m}$ factor,
so that the sup norms of both terms can be estimated by the sup norms of $f$. We may therefore tackle them separately.
The case $f=\Pi_0 f$ consists in using the explicit formula used to define $G_{R_0}$ on $\ker \Pi_\bot$, so we assume 
$f= \Pi_\bot f$. Then $G_{R_0} f$ vanishes along $\partial B_{R_0}$. Let us put $R_i := 2^i R_0$. Using a partition of unity,
we may write $f= \sum_i f_i$ with $\supp f_i \subset A_{R_i}$ and $\abs{f_i} \leq \abs{f}$. Then \ref{nashmoser} yields:
$$
R_i^a \norm{G_{R_0} f_j}_{L^\infty(A_i)} \leq c \; R_i^{a+2} \norm{f_j}_{L^\infty(A_{R_i}^1)} 
+ c \; R_i^{-\delta_a} \norm{G_{R_0} f_j}_{L^2(A_{R_i}^1)},
$$
where $\delta_a = \frac{m}2-a$ (note that $A_{R_0}^1$ should be understood as $B_{4R_0} \backslash B_{R_0}$ and that the corresponding Moser-type estimate near the boundary is standard). Picking any $\delta$ close to $\delta_a$, we get
\begin{eqnarray*}
R_i^{-\delta_a} \norm{G_{R_0} f_j}_{L^2(A_{R_i}^1)} \leq c \; R_i^{\delta-\delta_a} \norm{G_{R_0} f_j}_{L^2_{\delta}(A_{R_i}^1)}
&\leq& c \; R_i^{\delta-\delta_a} \norm{f_j}_{L^2_{\delta-2}} \\
&\leq& c \; \left( \frac{R_i}{R_j} \right)^{\delta-\delta_a} \norm{r^{a+2} f_j}_{L^\infty}.
\end{eqnarray*} 
Now, given $i$ and $j$, we choose $\delta$ so that $\delta-\delta_a$ is $\epsilon$ times the sign of $j-i$ for some small positive number $\epsilon$ (and zero if $i=j$). Then we find
$$
R_i^a \norm{G_{R_0} f_j}_{L^\infty(A_{R_i})} \leq c \;   2^{-\epsilon \abs{j-i}} \norm{r^{a+2} f}_{L^\infty}. 
$$
Summing over $j$ leads to:
$$
R_i^a \norm{G_{R_0} f}_{L^\infty(A_i)} \leq c \; \norm{r^{a+2} f}_{L^\infty} 
$$
and the result follows at once.
\end{proof}

\end{document}